\newtheorem{thm}{Theorem}
\newtheorem{cor}[thm]{Corollary}
\newtheorem{pro}[thm]{Proposition}
\theoremstyle{df}
\newtheorem{df}[thm]{Definition}
\theoremstyle{remark}
\newtheorem{que}[thm]{Question}
\newcommand{\Nat}{\omega}
\newcommand{\Iff}{\Longleftrightarrow}
\renewcommand{\iff}{\leftrightarrow}
\newcommand{\dom}{\operatorname{dom}}
\newcommand{\mc}{\mathcal}
\newcommand{\eps}{\varepsilon}
\renewcommand{\phi}{\varphi}
\newcommand{\Hm}{\mathcal{H}}
\newcommand{\Cant}{2^\omega}
\newcommand{\Baire}{\omega^\omega}
\newcommand{\Str}{2^{<\omega}}
\newcommand{\Nstr}{\omega^{<\omega}}
\newcommand{\Cyl}[1]{\ensuremath{\llbracket #1 \rrbracket}}
\newcommand{\Rest}[1]{\ensuremath{|_{#1}}}
\newcommand{\Tup}[1]{\ensuremath{\langle #1 \rangle}}
\renewcommand{\S}{\ensuremath{\mathrm{\Sigma}}}
\renewcommand{\P}{\ensuremath{\mathrm{\Pi}}}
\newcommand{\Sleq}{\ensuremath{\subseteq}}
\newcommand{\Conc}{\ensuremath{\mbox{}^\frown}}
\newcommand{\Estr}{\ensuremath{\epsilon}}
\newcommand{\join}[1][\mbox{}]{\ensuremath{\oplus_{#1}}}
\newcommand{\T}{\ensuremath{\operatorname{T}}}
\begin{document}
	\title{Finding Subsets of Positive Measure\footnote{
		This is an extended journal version of the conference paper \cite{K.Reimann:10}.
		The final publication of \cite{K.Reimann:10} is available at Springer via http://dx.doi.org/10.1007/978-3-642-13962-8\_26.
	}}
	\author{Bj{\o}rn Kjos-Hanssen\footnote{This work was partially supported by
		a grant from the Simons Foundation (\#315188 to Bj\o rn Kjos-Hanssen) and by NSF grants DMS-0901020 and DMS-0652669.} \and Jan Reimann\footnote{
		Reimann was partially supported by Templeton Foundation Grant 13404 ``Randomness and the Infinite'' and by NSF grants DMS-0801270 and DMS-1201263.
		}} 
	%\authorrunning{Kjos-Hanssen and Reimann}
	%\institute{Department of Mathematics, \at University of Hawai{\textquoteleft}i at M{\=a}noa, Honolulu HI 96822\\ \email{bjoern@math.hawaii.edu} 
	%\and Department of Mathematics, \at Pennsylvania State University, University Park,  PA 16802 \\ \email{reimann@math.psu.edu}
	%}
	\maketitle
	\begin{abstract}
		An important theorem of geometric measure theory (first proved by Besicovitch and Davies for Euclidean space) says that
		every analytic set of non-zero $s$-dimensional Hausdorff measure $\Hm^s$ contains a closed subset of non-zero (and indeed finite) $\Hm^s$-measure.
		We investigate the question how hard it is to find such a set, in terms of the index set complexity, and in terms of the complexity of the parameter needed to define such a closed set.
		Among other results, we show that given a (lightface) $\S^1_1$ set of reals in Cantor space, there is always a $\P^0_1(\mathcal{O})$ subset on non-zero $\Hm^s$-measure definable from Kleene's $\mc O$.
		On the other hand, there are $\P^0_2$ sets of reals where no hyperarithmetic real can define a closed subset of non-zero measure.
		%\keywords{geometric measure theory \and algorithmic randomness \and computability theory}
	\end{abstract}

	\section{Introduction}

		A most useful property of Lebesgue measure $\lambda$ is its (inner) \emph{regularity}: For any measurable set $E$, we can find an $F_\sigma$ set $F \subseteq E$ with $\lambda(E) = \lambda(F)$.
		In other words, any measurable set can be represented as an $F_\sigma$ set plus a nullset.
		This means that, for measure theoretic considerations, $E$ can be replaced by an $F_\sigma$, simplifying the complicated topological structure of arbitrary measurable sets.

		It is a basic result in geometric measure theory that regularity holds for  $s$-dimensional \emph{Hausdorff measure} $\Hm^s$ ($s>0$), too, with one important restriction. 

		\begin{thm}[Besicovitch and Moran \cite{BesicMoran}]
			If $E$ is $\Hm^{s}$-measurable and of finite $\Hm^{s}$-measure, then there exists an $F_\sigma$ set $F \subseteq E$ so that $\Hm^{s}(F) = \Hm^{s}(E)$. 
		\end{thm}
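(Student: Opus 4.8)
The plan is to reduce the statement, which concerns an arbitrary $\Hm^s$-measurable set, to the regularity of an ordinary \emph{finite} Borel measure, for which inner approximation by closed sets is classical. Two facts drive the argument. First, $\Hm^s$ is \emph{Borel regular}: every set $A$ is contained in a Borel (indeed $G_\delta$) set $B$ with $\Hm^s(B)=\Hm^s(A)$, obtained by intersecting the open hulls of near-optimal $\delta$-covers as $\delta\to 0$. Second, any finite Borel measure on a metric space is inner regular by closed sets. Since the ambient space here (Euclidean space, or $\Cant$) is separable and, in the Cantor case, compact, ``closed'' subsets assemble into the desired $F_\sigma$ sets.

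First I would replace $E$ by a Borel set of the same measure sitting \emph{inside} it. Using Borel regularity, pick a Borel $B \Sgeq E$ with $\Hm^s(B) = \Hm^s(E) < \infty$; since $E$ is measurable and the total measure is finite, $\Hm^s(B \setminus E) = 0$. Applying Borel regularity once more to the nullset $B \setminus E$, choose a Borel $G \Sgeq B \setminus E$ with $\Hm^s(G) = 0$, and set $B' = B \setminus G$. Then $B'$ is Borel, $B' \Sleq E$, and $\Hm^s(B') = \Hm^s(B) = \Hm^s(E)$. It now suffices to find an $F_\sigma$ set $F \Sleq B'$ with $\Hm^s(F) = \Hm^s(B')$, for such an $F$ lies in $E$ and has the right measure.

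For this last step I would pass to the finite Borel measure $\nu = \Hm^s \Rest{B'}$, i.e. $\nu(A) = \Hm^s(A \cap B')$; it is a metric outer measure, finite since $\nu(X) = \Hm^s(B') < \infty$, so every Borel set is $\nu$-measurable. The standard good-sets argument shows $\nu$ is inner regular by closed sets: the family of Borel sets that can be squeezed between a closed set and an open set of arbitrarily small $\nu$-difference contains every closed set (approximate a closed $C$ from outside by the open neighborhoods $\{x : d(x,C) < 1/n\} \downarrow C$, using finiteness of $\nu$ for continuity from above) and forms a $\sigma$-algebra, hence contains all Borel sets. Applying inner regularity to $B'$ yields closed $C_n \Sleq B'$ with $\nu(B' \setminus C_n) < 1/n$; then $F = \bigcup_n C_n$ is the sought $F_\sigma$ set, since $\nu(B' \setminus F) = 0$ gives $\Hm^s(F) = \nu(F) = \nu(B') = \Hm^s(B')$.

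The main obstacle is conceptual rather than computational: one cannot apply measure regularity to $E$ directly, because $E$ is only Carath\'eodory-measurable and $\Hm^s$ fails to be finite---even $\sigma$-finite---on the whole space. Borel regularity is exactly what bridges the gap between measurable and Borel sets, while restricting to a set of finite measure is what makes the continuity-from-above step (and hence the whole regularity machinery for finite Borel measures) available. Care is needed to ensure the approximating closed sets are genuine subsets of $B'$ (and thus of $E$), which is why I extract the full-measure Borel set $B'$ before invoking inner regularity.
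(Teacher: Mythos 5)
Your proof is correct, but there is nothing in the paper to compare it against: the paper states this theorem without proof, quoting it as a classical result with a citation to Besicovitch and Moran. Your argument is the standard modern derivation (essentially the one found in Rogers' \emph{Hausdorff measures} or in Mattila): use Borel regularity of $\Hm^s$ twice --- once to get a Borel hull $B \supseteq E$ of equal measure, once to cover the nullset $B \setminus E$ --- so as to trap a Borel set $B' \subseteq E$ with $\Hm^s(B') = \Hm^s(E)$, and then apply inner regularity by closed sets of the finite Borel measure $\nu(A) = \Hm^s(A \cap B')$ via the good-sets $\sigma$-algebra argument. Each step is sound, and you correctly identify the two places where the hypotheses do real work: the subtraction $\Hm^s(B \setminus E) = \Hm^s(B) - \Hm^s(E) = 0$ requires both the Carath\'eodory measurability of $E$ (with test set $B$) and the finiteness of $\Hm^s(E)$, and passing to $B'$ before invoking regularity is what circumvents the failure of $\sigma$-finiteness of $\Hm^s$ on the whole space. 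This dependence on finiteness is exactly consistent with the paper's subsequent remark that the hypothesis $\Hm^s(E) < \infty$ is essential, witnessed by Besicovitch's $G_\delta$ counterexample.
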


		In particular, one can approximate measurable sets of finite measure from inside through closed sets.
		\begin{cor}[Subsets of finite measure] \label{cor:inner_reg}
			\begin{equation} \label{equ:inner_reg}
				\Hm^{s}(E) = \sup \{ \Hm^{s}(C) \colon C \subseteq E \text{ closed}, \Hm^{s}(C) < \infty \},
			\end{equation}
		\end{cor}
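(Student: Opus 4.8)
The plan is to prove the two inequalities in \eqref{equ:inner_reg} separately, splitting the lower bound according to whether $\Hm^s(E)$ is finite or infinite. The bound $\Hm^s(E) \geq \sup\{\,\Hm^s(C) : C \subseteq E \text{ closed}, \Hm^s(C) < \infty\,\}$ is immediate from monotonicity of the outer measure: every closed $C \subseteq E$ has $\Hm^s(C) \leq \Hm^s(E)$, so the supremum cannot exceed $\Hm^s(E)$. Everything then reduces to producing finite-measure closed subsets of $E$ whose measures approach $\Hm^s(E)$ from below. When $\Hm^s(E) < \infty$ this is a soft consequence of the theorem of Besicovitch and Moran: it yields an $F_\sigma$ set $F = \bigcup_n C_n \subseteq E$ with closed $C_n$ and $\Hm^s(F) = \Hm^s(E)$. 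Passing to the finite unions $D_n = \bigcup_{k \le n} C_k$ (still closed) we may take the $C_n$ increasing. Since $\Hm^s$ is a metric outer measure, closed sets are $\Hm^s$-measurable and $\Hm^s$ is countably additive on them, so continuity from below gives $\Hm^s(D_n) \to \Hm^s(F) = \Hm^s(E)$. Each $D_n$ is a closed subset of $E$ with $\Hm^s(D_n) \le \Hm^s(E) < \infty$, hence admissible in the supremum, which is therefore $\geq \Hm^s(E)$; combined with the trivial bound this gives equality.

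The infinite case is where the real work lies, and it is the step I expect to be the main obstacle. Suppose $\Hm^s(E) = \infty$; I must show that $s_0 := \sup\{\,\Hm^s(C) : C \subseteq E \text{ closed}, \Hm^s(C) < \infty\,\}$ equals $\infty$. Assume toward a contradiction that $s_0 < \infty$. Choose closed $C_n \subseteq E$ of finite measure with $\Hm^s(C_n) \to s_0$, put $D_n = \bigcup_{k \le n} C_k$ and $F = \bigcup_n D_n$. As in the finite case, each $D_n$ is closed of finite measure, so $\Hm^s(D_n) \le s_0$, and continuity from below gives $\Hm^s(F) = \lim_n \Hm^s(D_n) = s_0 < \infty$. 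Because $F$ is Borel, hence measurable, additivity yields $\Hm^s(E \setminus F) = \Hm^s(E) - \Hm^s(F) = \infty$, so $E \setminus F$ is a set of positive measure. This is the point at which Besicovitch--Moran is insufficient — it never speaks about infinite-measure sets — and the Besicovitch--Davies theorem highlighted in the abstract must be invoked: it produces a closed set $D \subseteq E \setminus F$ with $0 < \Hm^s(D) < \infty$. Then each $D \cup D_n$ is a closed subset of $E$ of finite measure, and as $D$ and $D_n$ are disjoint, $\Hm^s(D \cup D_n) = \Hm^s(D) + \Hm^s(D_n) \to \Hm^s(D) + s_0 > s_0$, so for large $n$ we exceed $s_0$, contradicting its definition. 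Hence $s_0 = \infty$ and \eqref{equ:inner_reg} holds in this case as well.

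The essential difficulty, then, is that for $\Hm^s(E) = \infty$ the supremum ranges only over \emph{finite}-measure closed sets, and what is really being asserted is the \emph{semifiniteness} of $\Hm^s$: an infinite-measure set must contain closed subsets of arbitrarily large finite measure. Besicovitch--Moran does not supply this, and it is precisely Besicovitch--Davies that does. One technical point to record in the argument above is that Besicovitch--Davies applies to \emph{analytic} sets, so $E \setminus F$ must be analytic; this holds when $E$ is assumed Borel or analytic, and for a merely $\Hm^s$-measurable $E$ one should first descend to a Borel subset of the same measure before running the extraction argument.
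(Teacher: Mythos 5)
Your first paragraph is exactly the intended proof: the corollary inherits the hypotheses of the Besicovitch--Moran theorem it is attached to ($E$ measurable with $\Hm^{s}(E)<\infty$), so the finiteness constraint in the supremum is vacuous, and your argument --- take the $F_\sigma$ set $F=\bigcup_n C_n\subseteq E$ with $\Hm^{s}(F)=\Hm^{s}(E)$, pass to the increasing closed sets $D_n=\bigcup_{k\le n}C_k$, and use measurability of closed sets (as $\Hm^s$ is a metric outer measure) together with continuity from below to get $\Hm^{s}(D_n)\to\Hm^{s}(E)$ --- is precisely how the paper passes from Theorem~1 to the corollary.

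Your second and third paragraphs, however, overshoot the statement, and the third contains a genuine error. The infinite-measure case is not part of this corollary: the paper immediately stresses that the requirement $\Hm^{s}(E)<\infty$ is essential, and the validity of \eqref{equ:inner_reg} for sets of infinite measure is exactly the content of Theorem~\ref{BD} (Besicovitch--Davies), which requires a definability hypothesis (analyticity) and is proved later via Theorem~\ref{Davies}. Invoking Besicovitch--Davies here is therefore circular relative to the paper's development, although your reduction of the full equality \eqref{equ:inner_reg} for analytic $E$ to the semifinite form of that theorem (every analytic set of infinite measure contains a closed subset of finite positive measure), via the exhaustion-and-contradiction argument with $s_0$, is in itself correct and standard. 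The concrete mistake is your closing remark that for a merely $\Hm^{s}$-measurable $E$ one can ``first descend to a Borel subset of the same measure.'' In the non-$\sigma$-finite regime this is false, and the corollary itself fails for such $E$: the paper cites Besicovitch's concentrated set, a measurable plane set of infinite $\Hm^{1}$-measure every subset of which has $1$-dimensional measure $0$ or $\infty$. For that set the right-hand side of \eqref{equ:inner_reg} is $0$ while the left-hand side is infinite; moreover it contains no Borel subset of infinite measure at all, since Besicovitch--Davies applied to such a subset would yield a closed subset of finite positive measure, a contradiction. So no Borel descent is available, and this is exactly why the hypothesis $\Hm^{s}(E)<\infty$ in the corollary (or, in Theorem~\ref{BD}, analyticity of $E$) cannot be dropped.
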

 
		The requirement that $\Hm^{s}(E) < \infty$ is essential, since Besicovitch\cite{besicovitch:approximation_1954a} later showed that
		there is a $G_\delta$ set $G$ of Hausdorff dimension $1$ so that for every $F_\sigma$ subset $F \subset G$, the Hausdorff dimension of $G\setminus F$ is $1$, too. 

		Nevertheless, one can ask whether \eqref{equ:inner_reg} remains true even when $\Hm^{s}(E)$ is infinite. This is indeed so. In fact, one can approximate $E$ in measure by closed sets of \emph{finite measure}, provided $E$ is \emph{analytic}. 

		\begin{thm} \label{BD}
			The \emph{Subsets of finite measure} property \eqref{equ:inner_reg} holds for any analytic $(\pmb{\S}^1_1)$ subset of the real line.
		\end{thm}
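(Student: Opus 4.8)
The plan is to reduce the statement to the following claim $(\ast)$: for every analytic $E$ and every finite $c<\Hm^s(E)$ there is a closed set $C\subseteq E$ with $c\le\Hm^s(C)<\infty$. Granting $(\ast)$, the nontrivial inequality in \eqref{equ:inner_reg}---that the supremum is at least $\Hm^s(E)$---follows by letting $c$ increase to $\Hm^s(E)$, while the reverse inequality is immediate from monotonicity. When $\Hm^s(E)<\infty$ the claim is already covered by Corollary~\ref{cor:inner_reg}, since analytic sets are $\Hm^s$-measurable; so the real work is the case $\Hm^s(E)=\infty$, where (as the Besicovitch example warns) truncating by a ball need not produce a finite, positive-measure closed piece.

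I would carry out the construction with \emph{net measures}. In $\Cant$ the natural net consists of the cylinders $\Cyl{\sigma}$, $\sigma\in\Str$, each of diameter $2^{-|\sigma|}$; assigning weight $2^{-s|\sigma|}$ and running the Method~II construction yields a net measure $\mathcal{M}^s$ comparable to $\Hm^s$ up to a fixed constant, so it suffices to prove $(\ast)$ for $\mathcal{M}^s$. The decisive advantage is the \emph{increasing sets theorem} for net measures: for an arbitrary increasing sequence $A_0\subseteq A_1\subseteq\cdots$ one has $\mathcal{M}^s(\bigcup_k A_k)=\lim_k \mathcal{M}^s(A_k)$, with no measurability hypothesis. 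This is exactly the continuity from below that fails for $\Hm^s$ on non-$\sigma$-finite sets, and it is the engine that makes the infinite-measure case tractable.

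Next I would fix a representation of $E$ as a projection, writing $E=\{x:\exists f\ (x,f)\in[\T]\}$ for a tree $\T$ on $2\times\omega$ (equivalently, through a regular Souslin scheme of closed sets with vanishing diameters). The sole obstruction to a subset of $E$ being closed is that projection along the non-compact factor $\Baire$ need not preserve closedness. To repair this I restrict the $\omega$-branching: given $g\colon\Nstr\to\omega$, let $K_g\subseteq E$ be the set projected from those branches $f$ with $f(n)\le g(f\Rest{n})$ for all $n$. The admissible branches then form a finitely branching, hence compact, tree, so $K_g$ is a continuous image of a compact set and therefore a closed subset of $E$. The goal becomes: choose $g$ so that $\mathcal{M}^s(K_g)$ is both bounded below by $c$ and finite.

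The core is a fusion/pruning argument. Enumerating $\Nstr$ as $\sigma_0,\sigma_1,\dots$, I build a decreasing chain $E=A_0\supseteq A_1\supseteq\cdots$, where $A_{k+1}$ bounds the branching at the single node $\sigma_k$. Since restricting that branching to values $\le j$ yields sets increasing in $j$ to $A_k$, the increasing sets theorem lets me choose $g(\sigma_k)$ so large that $\mathcal{M}^s(A_{k+1})$ stays above $c$---possible whether $\mathcal{M}^s(A_k)$ is finite or infinite---and $K_g=\bigcap_k A_k$ is then compact. The hard part is precisely the limit: $K_g$ is a \emph{decreasing} intersection of sets that may all have infinite measure, so a lower bound $>c$ at every finite stage need not survive the intersection, and compactness alone does not force $\Hm^s(K_g)<\infty$. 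The resolution---Davies' essential contribution---is to track, rather than the raw measure, the finite sums of cylinder weights committed by the pruning at each scale: arranging that these stay summable forces $\mathcal{M}^s(K_g)<\infty$, while a simultaneous lower bound on the net premeasure along the retained branches keeps $\mathcal{M}^s(K_g)\ge c$. Securing both estimates \emph{in the limit}, and not merely stage by stage, is the delicate heart of the proof; the reduction to $(\ast)$, the passage to net measures, and the compactification of the projection are all routine by comparison.
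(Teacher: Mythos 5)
Your scaffolding is essentially the skeleton of the proof the paper gives for its Cantor-space reformulation (Theorem \ref{Davies}): represent $E$ via a Souslin scheme, bound the branching by a function to compactify the projection, and use the Increasing Sets Lemma (your net-measure version is the legitimate analogue for the real line) to choose the bounds. But at the decisive step you name the difficulty instead of resolving it: you concede that securing the two estimates ``in the limit, and not merely stage by stage'' is the delicate heart, and offer only the slogan of tracking cylinder weights. The concrete missing idea is the \emph{Decreasing Sets Lemma} together with the right invariant. Your invariant $\mathcal{M}^s(A_k) > c$ on the raw measure is the wrong one --- as you yourself observe, it need not pass to the decreasing intersection. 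The paper instead maintains lower bounds on the \emph{premeasures at stage-dependent fixed scales}, $\Hm^s_{2^{-m_i}} E^{r\Rest{n}} > i$ for $i \leq n$, and then transfers them to the closed intersection $C = \bigcap_n C_n$ via the Decreasing Sets Lemma: for decreasing closed $C_n$,
\begin{equation*}
\Hm^s_{2^{-(m+1)}} C \;\geq\; \tfrac{1}{2}\lim_n \Hm^s_{2^{-m}} C_n .
\end{equation*}
That lemma is exactly what makes a fixed-scale lower bound survive the fusion; your proposal never states or uses any such principle, so the bound $\mathcal{M}^s(K_g) \geq c$ is unsupported.

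The second gap is finiteness of the limit set. The paper does not extract finiteness from the fusion at all. In the non-$\sigma$-finite case it maintains the additional invariant that each $E^{r\Rest{n}}$ is non-$\sigma$-finite (this is what allows the Increasing Sets Lemma to keep producing the bounds $>i$ at ever finer scales), concludes $\Hm^s C = \infty$, and only then cuts $C$ down to finite positive measure by a \emph{separate} appeal to Besicovitch's theorem for closed sets, stated at the start of Section 2; the $\sigma$-finite case is handled by the remark following Theorem \ref{Davies}, running the construction directly with $\Hm^s$, which there behaves like a finite Borel measure. Your plan to make the pruning weights ``stay summable'' is not an argument: the branching bound $g$ constrains the tree on $2\times\omega$, not covers of $K_g$ by cylinders, and no mechanism is given converting bounded branching into finite-mass covers at every scale --- indeed the set produced by the fusion in the paper's construction has \emph{infinite} $\Hm^s$-measure, so no such mechanism can exist for the fusion as you set it up. In short: correct reduction to $(\ast)$, correct compactification, correct use of continuity from below, but the two genuinely hard steps --- the decreasing-sets transfer of the lower bound and the infinite-to-finite reduction via Besicovitch's closed-set theorem --- are left unproved.
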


		This result is one of the cornerstones of geometric measure theory, since it allows to pass from a set of infinite measure,
		which may be cumbersome to deal with since $\Hm^{s}$ is not $\sigma$-finite, to a closed set of finite measure, on which $\Hm^{s}$ is much better behaved.
		The theorem was first shown for closed sets in Euclidean space by Besicovitch\cite{Besicovitch} and extended to analytic sets by Davies \cite{Davies}. We will therefore refer to Theorem \ref{BD} also as the \emph{Besicovitch-Davies Theorem}.

		Besicovitch \cite{besicovitch:concentrated_1933a} had shown before that there exists a measurable set in the Euclidean plane every subset of which has $1$-dimensional measure either $0$ or $\infty$.
		Hence some restrictions on the definability of $E$ are necessary for \eqref{equ:inner_reg} to hold.

		Moreover, the existence of subsets of finite measure also depends on the underlying space, as well as on the nature of the dimension function.
		Davies and Rogers \cite{davies-rogers:problem_1969} constructed a compact metric space $X$ and a dimension function $h$ such that $X$ has infinite $\Hm^{h}$-measure but $X$ does not contain any sets of finite positive $\Hm^{h}$-measure.

		A few years before, on the other hand, Larman \cite{larman:hausdorff_1967} had shown that \eqref{equ:inner_reg} does hold for a class of compact metric spaces (those of \emph{finite dimension} in the sense of \cite{larman:theory_1967}).
		Rogers \cite{Rogers} proves it for complete, separable \emph{ultrametric spaces}. Hence \eqref{equ:inner_reg} holds for Cantor space $\Cant$ and Baire space $\Baire$.
		Most recently, using a quite different approach, Howroyd \cite{howroyd} was able to prove the validity of \eqref{equ:inner_reg} for any analytic subset of a complete separable metric space.
		It holds also for generalized Hausdorff measures $\Hm^{h}$, too, provided the dimension function $h$ does not decrease to $0$ too rapidly.

		In the following, we will study the complexity of finding subsets of positive measure in Cantor space $\Cant$, endowed with the standard metric
		\[
			d(x,y) = \begin{cases}
			   2^{-\min\{n\colon x(n) \neq y(n)\}} &  x \neq y \\
			   0 & x = y. \\
			\end{cases}
		\]
		The hierarchies of effective descriptive set theory allow for a further ramification of regularity properties. Any (boldface) Borel set is effectively (lightface) Borel relative to a parameter.
		Hence we can, for instance, given a (lightface) $\S^0_\alpha$ set, measure how hard it is to find a $\S^0_2(y)$ subset of the same measure, by proving lower bounds on the parameter $y \in \Cant$. 

		Dobrinen and Simpson~\cite{Dobrinen-Simpson} investigated this question for $\S^0_3$ sets in  Lebesgue measure and discovered an interesting connection with measure-theoretic \emph{domination properties}.
		Kjos-Hanssen \cite{Kjos-Hanssen:2007a} in turn linked measure-theoretic domination properties to LR-reducibility, a reducibility concept from algorithmic randomness.
		Recently, Simpson \cite{simpson} gave a complete characterization of the regularity problem for Borel sets with respect to Lebesgue measure.
		One of his results states that the property that every $\S^0_{\alpha+2}$ ($\alpha$ a recursive ordinal) subset of $\Cant$ has a $\S^0_2(Y)$ subset of the same Lebesgue measure holds if and only if $0^{(\alpha)} \leq_{LR} Y$.
		His paper \cite{simpson} also contains a survey of previous results along with an extensive bibliography.

		In this paper, we study the complexity of the corresponding inner regularity for Hausdorff measure on $\Cant$, extending and refining previous work by the authors \cite{K.Reimann:10}.
		We will see that, in contrast to the case of Lebesgue measure, finding subsets of positive Hausdorff measure can generally not be done with the help of a \emph{hyperarithmetic} oracle.
		The core observation is that determining whe\-ther a set of reals has \emph{positive Hausdorff measure} is more similar to determining whe\-ther it is \emph{non-empty} than to determining whe\-ther it has \emph{positive Lebesgue measure}.

		Determining the exact strength of the Besicovitch-Davies Theorem is not only of intrinsic interest.
		A family of important problems in theoretical computer science ask some version of the question to what extent randomness (which is a useful computational tool) can be extracted from a weakly random source (which is often all that is available).
		Such questions can also be expressed in computability theory. The advantage, and simultaneously the disadvantage, of doing so is that one abstracts away from considering any particular model of efficient computation. 
		One way to conceive of weak randomness is in terms of \emph{effective Hausdorff dimension}.
		Miller \cite{M} and Greenberg and Miller \cite{GM} obtained a negative result for randomness extraction: there is a real of effective Hausdorff dimension 1, that does not Turing compute any Martin-L\"of random real.
		Despite this negative result, effective Hausdorff dimension, which is a ``lightface'' form of Hausdorff dimension, has independent interest,
		as it seems to offer a way to redevelop much of geometric measure theory (for example Frostman's Lemma \cite{Reimann}) in a more effective way. 

		Another conception of weak randomness comes from considering sets that differ from Martin-L\"of random sets only on a sparse set of bits \cite{Extracting}, or sets that are subsets of Martin-L\"of sets \cites{Kurtz, MRL, Law}.
		Actually, these conceptions are related, as we will try to illustrate with the help of the set $\operatorname{BN1R}$ of all reals that bound no 1-random real in the Turing degrees,
		i.e., those reals to which no Martin-L\"of random real is Turing reducible.

		\begin{thm}\label{min}
			The set $\operatorname{BN1R}$ has Hausdorff dimension 1.
		\end{thm}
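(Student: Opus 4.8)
The plan is to prove the two inequalities $\Hdim(\operatorname{BN1R}) \le 1$ and $\Hdim(\operatorname{BN1R}) \ge 1$ separately. The upper bound is immediate, since every subset of $\Cant$ has Hausdorff dimension at most $1$. All the work goes into the lower bound, and since $\Hdim$ is monotone, it suffices to produce, for each rational $s < 1$, a nonempty closed set $C_s \Sleq \operatorname{BN1R}$ with $\Hdim(C_s) \ge s$; letting $s \to 1$ then gives the claim.

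I would build $C_s = [T]$ as the set of paths through a tree $T \Sleq \Str$ obtained as the limit of a forcing/fusion construction, meeting two families of requirements. First, a \emph{branching} (dimension) requirement: I keep $T$ ``$s$-bushy'', arranging that the natural probability measure $\mu$ on $[T]$ (distributing mass uniformly among the branches) satisfies $\mu(\Cyl{\sigma}) \le 2^{-s|\sigma| + O(1)}$ for every $\sigma \in T$. By the mass distribution principle (the easy direction of Frostman's Lemma) this alone forces $\Hdim([T]) \ge s$. It is essential here that we lower-bound the \emph{classical} dimension by an actual mass distribution spread over a continuum of branches: the effective dimension of individual paths would not suffice, since a single real of high effective dimension spans only a dimension-$0$ singleton.

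Second, for every Turing functional $\Phi_e$ I must meet the requirement $R_e$: for every path $X \in [T]$, the real $\Phi_e^X$ is not Martin-L\"of random. By the Levin--Schnorr theorem this means forcing $\liminf_n (K(\Phi_e^X \restriction n) - n) = -\infty$ uniformly over all $X \in [T]$, i.e.\ driving every output $\Phi_e^X$ infinitely often into the successive levels of the universal test. I would satisfy $R_e$ by pruning $T$ beyond a suitable level, removing those branches along which $\Phi_e$ threatens to produce an incompressible output and replacing the full-branching pattern by a thinner but still sufficiently bushy one. The engine here is the construction of Greenberg and Miller \cite{GM}: because the branches of $[T]$ carry only a controlled amount of algorithmic information, one can arrange that no functional is able to \emph{concentrate} that information into an incompressible (hence random) output, so that the pruning demanded by $R_e$ can be carried out without killing the tree.

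The main obstacle is the tension between the two families of requirements, and in particular making the Greenberg--Miller balancing \emph{uniform over the whole tree}. In \cite{GM} the information content of a \emph{single} real is tracked and kept un-extractable while its effective dimension is pushed to $1$; here the same un-extractability must hold simultaneously along a continuum of branches whose branching rate tends to $1$, which is exactly the regime in which keeping complexity high and keeping it un-extractable are hardest to reconcile. Concretely, I expect the crux to be a counting/measure lemma showing that satisfying $R_e$ costs only a negligible fraction of the branching capacity, so that after pruning for all $e$ the surviving tree still obeys $\mu(\Cyl{\sigma}) \le 2^{-s|\sigma| + O(1)}$, together with a verification that the limit tree is nonempty and that each requirement acts only finitely often on each branch. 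Granting this, every $X \in [T]$ computes no Martin-L\"of random, so $[T] \Sleq \operatorname{BN1R}$ while $\Hdim([T]) \ge s$; letting $s \to 1$ completes the proof.
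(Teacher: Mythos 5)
Your high-level frame is fine: the upper bound is trivial, the lower bound via closed subsets $C_s \Sleq \operatorname{BN1R}$ carrying a mass distribution with $\mu(\Cyl{\sigma}) \le 2^{-s|\sigma|+O(1)}$ is a legitimate route, and your warning that the effective dimension of a single path cannot by itself lower-bound classical dimension is well taken. But there is a genuine gap at exactly the point you flag as the crux. First, the mechanism you describe for meeting $R_e$ --- pruning $T$ ``beyond a suitable level, removing those branches along which $\Phi_e$ threatens to produce an incompressible output'' --- is not available: whether $\Phi_e^X$ is incompressible is a tail property of the infinite path $X$, invisible at any finite level of $T$, so there is no finite (nor even effectively enumerable) criterion identifying the threatening branches to cut. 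Relatedly, your two desiderata conflict: forcing $\liminf_n (K(\Phi_e^X\Rest{n})-n)=-\infty$ for \emph{every} path requires infinitely many commitments along each path (or else forcing all outputs into one fixed effectively null class), which contradicts ``each requirement acts only finitely often on each branch.'' Second, the counting lemma you ``expect'' --- that satisfying all $R_e$ costs a negligible fraction of the $s$-bushiness --- is the entire mathematical content of the theorem in your approach, and it is strictly \emph{stronger} than the Greenberg--Miller result you invoke as the engine: \cite{GM} produce a single real of effective dimension $1$ computing no Martin-L\"of random, whereas you need every path of a continuum-branching tree to compute no random. As written, the proposal is a program whose hard step is deferred to a uniform strengthening of \cite{GM} that the cited construction does not immediately supply.

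For contrast, the paper's proof is one line: Theorem \ref{min} is ``merely a relativization'' of Greenberg--Miller. Unpacked: if $\Hdim(\operatorname{BN1R}) < s < 1$, code witnessing covers into a single oracle $z$, so that every member of $\operatorname{BN1R}$ has effective dimension $\le s$ relative to $z$; then relativize the complexity side of the Greenberg--Miller construction to $z$ --- while keeping the conclusion ``computes no $1$-random'' unrelativized, an asymmetry that matters, since ``computes no $z$-random'' is weaker than ``computes no $1$-random'' --- to obtain a member of $\operatorname{BN1R}$ of $z$-effective dimension $1$, a contradiction. This answers your singleton objection by quantifying over all oracles rather than by building a closed set. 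Your closed-set plan can also be repaired without any new fusion argument: by the Greenberg--Miller theorem that every $\operatorname{DNR}$ function bounded by a sufficiently slowly growing recursive order $h$ computes no Martin-L\"of random, the compact class of such functions, coded into $\Cant$, is a closed subset of $\operatorname{BN1R}$ of classical Hausdorff dimension $1$ (deleting one value per coordinate costs a vanishing fraction of the branching precisely because $h \to \infty$), which yields your $C_s$ for every $s<1$ wholesale rather than requirement by requirement. Note finally that once $\Hdim(\operatorname{BN1R})=1$ is known, closed subsets of non-zero $\Hm^s$-measure come for free from the Besicovitch--Davies theorem, which is how the paper proceeds immediately after Theorem \ref{min}.
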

		\begin{proof}
			This is merely a relativization of a theorem of Greenberg and Miller \cite{GM}. 
		\end{proof}

		Theorem \ref{min} says that high effective Hausdorff dimension is not sufficient to be able to extract randomness. It can also be used to deduce that infinite subsets of random sets are not sufficiently close to being random, either.

		The set $\operatorname{BN1R}$ is Borel, so by the Besicovitch-Davies Theorem, for any $s < 1$ it has a closed subset $C$ that has non-zero $\Hm^s$-measure.
		Each closed set $C$ in Cantor space is $\P^{0}_{1}(x)$ for some oracle $x$. By a reasoning similar to \cite{DK}*{Theorem 4.3}, each $x$-random closed set contains a member of $C$.
		It follows by reasoning as in \cite{MRL} that each $x$-random set has an infinite subset that does not Turing compute any $1$-random (Martin-L\"of random) set.
		Thus, if $x$ could be chosen recursive, we would have a positive answer to the following question.

		\begin{que}\label{q1}
			Does each $1$-random subset of $\omega$ have an infinite subset that computes no $1$-random sets?
		\end{que}

		A partial answer to this question is known, using other methods:

		\begin{thm}[Kjos-Hanssen \cite{Law}]
			Each $2$-random set has an infinite subset that computes no $1$-random sets.
		\end{thm}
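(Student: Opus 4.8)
The plan is to build the infinite subset directly by a forcing construction rather than going through a closed subset of $\operatorname{BN1R}$ as sketched above; that route would require the defining parameter $x$ to be $0'$-computable (so that $x$-randomness coincides with $2$-randomness), and whether such a parameter can be extracted is essentially the content of Question \ref{q1}. Instead, I would fix a $2$-random set $X$, take $0'$ as oracle, and use that $2$-randomness is exactly Martin-L\"of randomness relative to $0'$. I would build $A \subseteq X$ by effective Mathias forcing: conditions are pairs $(F,R)$ with $F$ finite, $R$ infinite, $\max F < \min R$, and $F \cup R \subseteq X$, where $(F',R')$ extends $(F,R)$ if $F \subseteq F'$, $R' \subseteq R$, and $F' \setminus F \subseteq R$. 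The generic $A$ is automatically infinite, since at each stage we may absorb the least element of the reservoir into the stem, and $A \subseteq X$ by construction.

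The requirements are $R_e \colon \Phi_e^A$ is not $1$-random, one for each Turing functional $\Phi_e$. To meet $R_e$ below a condition $(F,R)$, I would examine the finite extensions $F \cup S$ with $S \subseteq R$ finite and ask, using $0'$, whether the outputs $\Phi_e^{F \cup S}$ can be driven into arbitrarily late levels $U_k$ of the universal Martin-L\"of test (each of measure at most $2^{-k}$). If so, a single extension forces a long prefix of $\Phi_e^A$ into $U_k$ for $k$ exceeding the deficiency we are tracking, so $\Phi_e^A$ fails the test and is not $1$-random. If not, the convergent outputs reachable from $R$ all avoid some fixed late level, and a counting/measure argument then lets me thin $R$ to an infinite reservoir $R' \subseteq R$ (still inside $X$) on which $\Phi_e$ either stays partial or keeps producing only test-avoiding, hence non-random, output. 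Either way $R_e$ is secured while the reservoir remains infinite.

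The main obstacle is the bookkeeping that keeps every reservoir simultaneously infinite and contained in $X$ while the extension questions are decided. These questions are $\Sigma^0_1$ relative to the current reservoir, so the construction runs with a $0'$ oracle; the point where $2$-randomness must enter is in showing one never gets stuck, namely that the set of $X$ for which some requirement admits no good infinite reservoir inside $X$ forms a $0'$-Martin-L\"of nullset, so that a $2$-random $X$ always supplies the reservoirs the construction demands. Verifying that this exceptional set is genuinely a $0'$-test — and in particular that the measure estimates in the thinning step are uniform enough to be captured at level $2$ of the randomness hierarchy, rather than needing $3$-randomness — is the delicate part. This calibration is exactly what pins the result to $2$-random sets, and is why the stronger Question \ref{q1}, which would instead require a recursive parameter, remains open.
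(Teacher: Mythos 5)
Your sketch has the randomness logic inverted at its central step, and this is fatal as written. For the universal Martin-L\"of test $\{U_k\}$, the non-random reals are exactly those in $\bigcap_k U_k$; avoiding some level $U_k$ is what \emph{random} reals do. So in your second branch, where every convergent output reachable from the reservoir avoids a fixed late level, the conclusion ``test-avoiding, hence non-random'' is precisely backwards: a total $\Phi_e^A$ that avoids some $U_k$ \emph{passes} the universal test and is $1$-random, defeating $R_e$. Symmetrically, the first branch is too quick: ``a single extension forces a long prefix of $\Phi_e^A$ into $U_k$ \dots so $\Phi_e^A$ fails the test'' is false, since membership in one level $U_k$ does not witness non-randomness; you need $\Phi_e^A \in U_k$ for \emph{every} $k$, hence infinitely many successful forcing rounds per functional, interleaved over the construction. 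Once the branches are straightened out, the dangerous case is the one you treated as safe: at some condition, for some $k$, no finite extension within the reservoir drives the output into $U_k$. Your only remaining move there is to thin to an infinite sub-reservoir of $X$ forcing $\Phi_e^A$ partial, and nothing in the sketch shows this is possible --- indeed for $\Phi_e$ the identity it is not, and one must instead exploit sparseness of the stem-extension pattern to make $A$ itself non-random, a different strategy that your dichotomy does not provide.

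The remaining load-bearing claim --- that the set of $X$ for which some requirement admits no good reservoir is a $0'$-Martin-L\"of nullset, so that $2$-randomness of $X$ prevents the construction from sticking --- is exactly the content of the theorem, and it is asserted rather than argued; the ``counting/measure argument'' in the thinning step is never given, and it is unclear that the relevant events are even measurable in a way uniformly capturable by a $0'$-test. For comparison: the present paper does not prove this statement at all but imports it from \cite{Law}, explicitly noting it is obtained ``using other methods.'' The proof there does not use Mathias forcing; it runs through Martin-L\"of random closed sets and Galton--Watson processes (cf.\ \cite{DK}, \cite{MRL}), where the key intersection property --- that an $x$-random closed set meets every closed set of non-zero $\Hm^s$-measure --- does the work your unproved non-stuckness claim would have to do. So the verdict is a genuine gap: an inverted test-membership step in both branches of the dichotomy, and a missing measure-theoretic core, with the forcing framework itself offering no evident substitute.
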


		But it is easy to see that the set $x$ just referred to cannot be chosen recursive. To wit, 
		any $\P^0_1$ class of non-zero $\Hm^s$-measure contains a $\P^0_1$ class consisting entirely of reals of effective Hausdorff dimension $\geq s$.
		By the computably enumerable degree basis theorem this subclass has a path of c.e.\ degree. But every such real, being of diagonally non-computable Turing degree, is Turing complete by Arslanov's completeness criterion,
		and hence computes a $1$-random\footnote{For these basic facts, on may consult a textbook such as Downey and Hirschfeldt \cite{DH}.}.

		In the present article we show that $X$ can be taken recursive in Kleene's $O$, but in general, for arbitrary $\S^{1}_{1}$ classes (or even just arbitrary $\P^{0}_{2}$ classes), $x$ cannot be taken hyperarithmetical. 
  
		We expect the reader to be familiar with  basic descriptive set theory and the effective part on hyperarithmetic sets and Kleene's $\mathcal{O}$.
		Standard references are \cite{Rogers} and \cite{Sacks}. We also assume basic knowledge of Hausdorff measures and dimension, as can be found in \cite{CARogers} or \cite{mattila}.

		\subsection*{\em The generalized join-operator}
			We will frequently need to generate sets of non-zero $\Hm^s$-measure. The following ``coded-product'' construction presents a convenient method to do this.

			Given two reals $x,y \in \Cant$ and an infinite, co-infinite $A \subseteq \Nat$, we define their \emph{$A$-join} $x \join[A] y$ as follows.
			Assume $A = \{ a_1 < a_2 < \dots \}$ and $\Nat\setminus A = \{b_1 < b_2 < \dots \}$. Let $x \join[A] y$ be the unique real $z$ such that $z(a_n) = x(n)$ and $z(b_n) = y(n)$ for all $n$. For sets 
			sets $X, Y \subseteq \Cant$, we define $X \join[A] Y$ as
			\[
				X \join[A] Y = \{ x \join[A] y \colon x \in X, y \in Y \}. 
			\]
			For rational $s = a/b$, $0 < s < 1$, $a,b$ relatively prime, the \emph{canonical $s$-join}, is given by letting $A = \{ bn + i \colon n \in \Nat, i < a  \}$. In this case, we write $x \join[s] y$ and $X \join[s] Y$.

			Measure-theoretically, the join behaves like a ``coded'' product.
 
			\begin{pro} \label{pro:hmeas-join}
				Assume $A \subseteq \Nat$ is such that  for some $r > 0$ and $c > 0$, 
				\[
					|A \cap \{0,\dots, n-1\}| \geq rn - c \quad \text{ for all $n$}. 
				\]	
				If $E, F \subseteq \Cant$, $s,t \in [0,1]$ are such that $\Hm^s E > 0$ and $\Hm^t F > 0$, then
				\[
					\Hm^{rs+(1-r)t} (E \join[A] F) > 0. 
				\]  
			\end{pro}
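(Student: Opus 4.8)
The plan is to certify positive $\Hm^{u}$-measure (with $u := rs + (1-r)t$) via the mass distribution principle: I will construct a finite Borel measure $\theta$ that is supported on $E \join[A] F$ and satisfies a Frostman-type bound $\theta(\Cyl{\pi}) \le C\,2^{-u|\pi|}$ for all $\pi \in \Str$, since such a measure forces $\Hm^{u}(E \join[A] F) \ge \theta(\Cant)/C > 0$. The measure $\theta$ will be a ``coded product'' of Frostman measures on the two factors. Concretely, since $\Hm^{s}E > 0$ and $\Hm^{t}F > 0$, Frostman's lemma supplies nonzero finite Borel measures $\mu$ supported on $E$ and $\nu$ supported on $F$, together with constants $C_1, C_2 > 0$, such that $\mu(\Cyl{\sigma}) \le C_1 2^{-s|\sigma|}$ and $\nu(\Cyl{\tau}) \le C_2 2^{-t|\tau|}$ for all $\sigma,\tau \in \Str$. (In $\Cant$ one may first pass to closed subsets of positive finite measure, which exist by inner regularity; cf.\ Corollary \ref{cor:inner_reg}.)

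I would then take $\theta$ to be the push-forward of the product measure $\mu \times \nu$ under the coding map $(x,y) \mapsto x \join[A] y$, so that $\theta$ is supported on $E \join[A] F$ and $\theta(\Cant) = \mu(\Cant)\,\nu(\Cant) > 0$. The point of the coded product is that it factors on cylinders. For $\pi \in \Str$ of length $n$, the condition $x \join[A] y \in \Cyl{\pi}$ fixes precisely the first $k := |A \cap \{0,\dots,n-1\}|$ coordinates of the $E$-factor and the first $n-k$ coordinates of the $F$-factor, and these constraints are independent under $\mu \times \nu$, so
\[
  \theta(\Cyl{\pi}) = \mu(\Cyl{\sigma})\,\nu(\Cyl{\tau}) \le C_1 C_2\, 2^{-(sk + t(n-k))},
\]
where $\sigma$ and $\tau$ are the restrictions of $\pi$ to the $A$- and $(\Nat\setminus A)$-coordinates below $n$, of lengths $k$ and $n-k$ respectively.

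The crux is then a purely arithmetic estimate: I must show $sk + t(n-k) \ge un - O(1)$ uniformly in $n$, for this converts the display into the desired Frostman bound $\theta(\Cyl{\pi}) \le C\,2^{-un}$. Writing $u = t + r(s-t)$, a short computation gives $sk + t(n-k) - un = (s-t)(k - rn)$, so everything reduces to bounding $(s-t)(k-rn)$ below by a constant. This is exactly what the density hypothesis delivers: $k \ge rn - c$ gives $k - rn \ge -c$, which settles the case $s \ge t$ at once, since then $(s-t)(k-rn) \ge -(s-t)c \ge -c$. I expect the genuine difficulty to lie in the complementary regime $s < t$, where the low-exponent factor sits on the $A$-coordinates: there one must keep $k - rn$ controlled from above as well, so that the ``local dimension'' of $\theta$ cannot be dragged below $u$ when the proportion of $A$-coordinates is large. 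Securing this uniform control — leaning on the fact that the $A$ of interest (such as the one underlying the canonical $s$-join) possesses a genuine density rather than merely a lower density bound — is the step I would expect to require the most care; the rest is a routine application of the mass distribution principle.
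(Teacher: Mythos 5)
Your proposal is essentially the paper's own argument: the paper dispatches this proposition in one line, as ``a straightforward adaptation of the corresponding result for Euclidean spaces (Mattila, Theorem 8.10)'', and Mattila's proof of that product inequality is exactly what you write out --- Frostman measures on the two factors, the coded product measure, and the mass distribution principle. Your cylinder computation $\theta(\Cyl{\pi}) = \mu(\Cyl{\sigma})\,\nu(\Cyl{\tau})$ and the identity $sk + t(n-k) - un = (s-t)(k - rn)$ constitute the required adaptation, and for $s \ge t$ the hypothesis $k \ge rn - c$ yields $\theta(\Cyl{\pi}) \le C_1 C_2\, 2^{c}\, 2^{-un}$, which finishes the proof in that regime.

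The difficulty you flag for $s < t$ is real, and it is a defect of the \emph{statement}, not of your proof: with only the one-sided density hypothesis the conclusion is false there. Take $r = 1/2$, $c = 1$, and $A = \Nat \setminus \{2^n \colon n \in \Nat\}$, so that $|A \cap \{0,\dots,n-1\}| \ge n/2 - 1$ for all $n$ while $A$ has density $1$. Let $E$ satisfy $\Hm^s E > 0$ with $\Hdim E = s$ (for instance $E = \Cant \join[s] \{0\}$ for rational $s$), and let $F = \Cant$ with $t = 1$, so $u = (s+1)/2 > s$. At depth $n$ the join $E \join[A] F$ meets at most $2^{s k_n + O(1)} \cdot 2^{\,n - k_n}$ cylinders, where $k_n = |A \cap \{0,\dots,n-1\}| = n - O(\log n)$; hence its upper box dimension, and a fortiori its Hausdorff dimension, is at most $s < u$, so $\Hm^{u}(E \join[A] F) = 0$. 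Thus the proposition requires either $s \ge t$ or a matching upper bound $|A \cap \{0,\dots,n-1\}| \le rn + c$; the latter holds for the canonical $s$-join, and every invocation in the paper is of one of these two kinds (the exponent pairs used are $(s,t) = (1,0)$ and the canonical join), so your argument in fact covers all the paper's applications. One small caveat on your parenthetical: Corollary \ref{cor:inner_reg} applies to $\Hm^s$-measurable sets of \emph{finite} measure, and Frostman's lemma to closed or analytic sets, whereas the proposition is stated for arbitrary $E, F \subseteq \Cant$; for literally arbitrary sets one should instead run the classical Besicovitch--Moran covering argument, though all sets to which the paper applies the proposition are closed, where your route goes through unchanged.
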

			This follows by a straightforward adaptation of the corresponding result for Euclidean spaces (see \cite{mattila}, Theorem 8.10).

	\section{The Besicovitch-Davies Theorem}
		In 1952, Besicovitch \cite{Besicovitch} proved the following theorem (for Euclidean space in place of $\Cant$).

		\begin{thm}
			If $F \subseteq \Cant$ is closed and $\Hm^{s} F = \infty$, then, for any $c > 0$ there exists a closed set $C \subseteq F$ such that $c < \Hm^s C < \infty$.
		\end{thm}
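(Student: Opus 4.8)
The plan is to pass to the tree representation of the closed set and to control the $\Hm^s$-measure of a subtree through a max-flow argument. Write $T = \{\sigma \in \Str : \Cyl{\sigma} \cap F \neq \nil\}$, so that $F = [T]$ and $T$ is pruned. Since $\Cant$ is an ultrametric space, covers by cylinders are cofinal among all covers, and by compactness an optimal cover at scale $2^{-n}$ may be taken to be a finite antichain (a \emph{cut}) of $T$ consisting of strings of length $\geq n$; assigning each $\sigma$ the weight $2^{-s|\sigma|} = (\on{diam}\Cyl{\sigma})^s$, one obtains
\[
	\Hm^s([T']) = \sup_n \Phi_n(T'), \qquad \Phi_n(T') = \min\Big\{ \textstyle\sum_{\sigma \in A} 2^{-s|\sigma|} : A \text{ a cut of } T' \text{ with } |\sigma| \geq n \Big\}
\]
for any subtree $T' \subseteq T$, where $\Phi_n$ is nondecreasing in $n$. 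The hypothesis $\Hm^s F = \infty$ says exactly that $\Phi_n(T) \to \infty$.

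Fix a target value $V$ with $c < V < \infty$. First I would isolate the two mechanisms that pin down $\Hm^s$. For the \emph{lower} bound: if $\mu$ is a finite measure carried by a subtree's body $C$ with $\mu(\Cyl{\sigma}) \leq 2^{-s|\sigma|}$ for all $\sigma$ of length $\geq N$, then every cut $A$ at depth $\geq N$ satisfies $\sum_{\sigma \in A} 2^{-s|\sigma|} \geq \sum_{\sigma \in A}\mu(\Cyl{\sigma}) \geq \mu(C)$, so $\Phi_N(C) \geq \mu(C)$ and hence $\Hm^s(C) \geq \mu(C)$ --- a deep-scale mass distribution principle. For the \emph{upper} bound: if at each depth there is a cut $A$ consisting of \emph{saturated} nodes, i.e.\ nodes with $\mu(\Cyl{\sigma}) = 2^{-s|\sigma|}$, then its cost equals $\sum_{\sigma \in A}\mu(\Cyl{\sigma})$, which by conservation of mass through a cut equals $\mu(C)$; so such cuts have cost exactly $\mu(C)$ at all depths, giving $\Hm^s(C) \leq \mu(C)$.

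The construction must therefore produce a single measure realizing both. Since $\Phi_n(T) \to \infty$, choose $N$ with $\Phi_N(T) \geq V$. On each finite truncation of $T$, assign a node $\sigma$ capacity $2^{-s|\sigma|}$ when $|\sigma| \geq N$ and capacity $\infty$ otherwise; every cut then has cost at least $\Phi_N(T) \geq V$, so by max-flow--min-cut there is a flow of value $V$ through the truncation respecting these capacities. Routing this flow \emph{greedily}, filling one child up to its subtree capacity before spilling into the sibling, forces all but a single ``frontier'' branch at each node to be saturated. Passing to a limit along the truncations (using K\"onig's lemma and compactness of the space of flows, legitimate because each level carries only finitely many nodes of positive flow) yields a flow on $T$, i.e.\ a measure $\mu$ with $\mu(\Cant) = V$ and $\mu(\Cyl{\sigma}) \leq 2^{-s|\sigma|}$ for $|\sigma| \geq N$. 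Taking $C$ to be the support of $\mu$ (a closed subset of $F$), the two mechanisms give $V \leq \Hm^s(C) \leq V$, whence $c < \Hm^s(C) = V < \infty$.

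The main obstacle is the \emph{simultaneous} control of the two bounds, which all comes down to showing that the greedy flow is genuinely saturating --- that the unsaturated frontier, along which flow is merely funneled downward, carries vanishing mass, so that the saturated nodes do form cost-$V$ cuts at every depth (up to a $\mu$-null set). This is where compactness of $\Cant$ and its ultrametric (hence net-measure) structure are essential; indeed, the classical route of Besicovitch and Davies encapsulates exactly this difficulty in the \emph{increasing sets lemma} for net measures, and one could alternatively prove the theorem by establishing that lemma and building $C$ as a decreasing intersection of clopen approximations. I would expect the flow formulation to be the more transparent one in the present ultrametric setting.
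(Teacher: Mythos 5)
First, a point of reference: the paper does not actually prove this theorem. It cites Besicovitch \cite{Besicovitch} for the Euclidean case and Larman \cite{Larman} for the Cantor-space version, recording only the two ingredients of the classical argument, the Increasing Sets Lemma and the Decreasing Sets Lemma (which it then uses to prove the Davies extension). So your proposal is being measured against that classical route. Your reduction to net measures on trees is sound, and the lower-bound half of your argument is essentially complete: truncated max-flows with capacities $2^{-s|\sigma|}$ at depth $\geq N$ do pass to a limit measure $\mu$ of mass $V$ with $\mu(\Cyl{\sigma}) \leq 2^{-s|\sigma|}$ for $|\sigma| \geq N$, and the deep-scale mass distribution principle then gives $\Hm^s(C) \geq \Hm^s_{2^{-N}}(C) \geq V > c$ for $C = \operatorname{supp}\mu$.

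The gap is the upper bound, and while you flag its location honestly, the mechanism you propose for it is mis-stated, not merely unproven. Greedy routing fills a child ``up to its subtree capacity,'' which makes the child \emph{full} (its flow equals the min-cut value of its subtree), not \emph{saturated} in your sense ($\mu(\Cyl{\sigma}) = 2^{-s|\sigma|}$); the two coincide only when the node's own capacity realizes the min cut below it. This has two consequences. First, the partial nodes do not form ``a single frontier branch'': every saturated node can spawn a fresh partial child, so the frontier is a countable forest of branches. This part is repairable, since the flow along a frontier branch is capped by $2^{-sn}$ at depth $n$ and the countably many branches can be truncated at depths $M_i$ with $\sum_i 2^{-sM_i} < \eps$. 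Second, and fatally for the argument as written, inside a full-but-unsaturated subtree the capacity constraint may never bind, so no saturated cuts need exist there, and nothing in your construction bounds the cost of \emph{deep} cuts through such regions: max-flow--min-cut controls only the cheapest cut at depth $\geq N$, whereas $\Hm^s(C) \leq V$ requires cuts of cost close to $V$ at \emph{every} depth. The needed dichotomy --- thick subtrees force saturation to propagate downward, thin subtrees admit cheap deep cuts, and the two regimes must be combined with telescoping error control --- is exactly the quantitative content of the Increasing Sets Lemma, which you name as the encapsulation of the difficulty but neither prove nor invoke as a black box. (The greedy structure must also be shown to survive the limit along truncations, since subtree capacities of truncations only converge monotonically to those of the infinite tree; this step too is unverified.) As it stands, then, you have a correct and appealing program with a rigorous lower half, but the crux of the theorem --- finiteness of $\Hm^s(C)$ --- is deferred rather than established.
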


		The version for Cantor space follows from a paper by Larman \cite{Larman}. 
		Two technical lemmas play a crucial role in Besicovitch's proof (both hold in Cantor space, see e.g.\ \cite{CARogers}).

		\begin{enumerate}[(1)]
			\item The \emph{Increasing Sets Lemma} (valid in compact metric spaces): If $\{E_n\}$ is an increasing sequence of sets, then for $E = \bigcup E_n$, for any $m$,
			\[
				\Hm^s_{2^{-m}} E = \lim_n  \Hm^s_{2^{-m}} E_n.
			\] 
			(Note that here $\Hm^s$ is considered as an outer measure.)
	
			\item The \emph{Decreasing Sets Lemma}: If $\{C_n\}$ is a decreasing sequence of closed sets in $\Cant$, then for $C  = \bigcap C_n$,
			\[
				\Hm^s_{2^{-(m+1)}} E \geq \frac{c}{2} \lim_n \Hm^s_{2^{-m}} C_n,
			\]
			where $c$ is some positive, finite constant. (In Cantor space, we can choose $c = 1$)
	
		\end{enumerate}

		In the same journal in which Besicovitch's paper appeared, Davies \cite{Davies} published a proof showing that Besicovitch's result can be extended to analytic $(\mathbf{\S}^1_1)$ sets.

		We reformulate Davies' argument in Cantor space in a way suitable for our analysis.  

		\begin{thm} \label{Davies}
			Suppose $E \subseteq \Cant$ is $\S^1_1$. Assume further that $E$ is not $\sigma$-finite for $\Hm^s$, i.e.\ $E$ is not a countable union of measurable sets of finite $\Hm^s$-measure.
			Then there exists a closed set $C \subseteq E$ of infinite $\Hm^s$-measure.
		\end{thm}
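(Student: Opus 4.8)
The plan is to reformulate Davies' fusion argument through a Suslin scheme of closed sets and to drive it with the two lemmas quoted above. First I would fix a regular defining system $(F_\sigma)_{\sigma \in \Nstr}$ of closed subsets of $\Cant$ with $F_{\sigma \Conc i} \Sleq F_\sigma$ and $E = \bigcup_{f \in \Baire} \bigcap_n F_{f \Rest n}$, so that every branch contributes a closed subset $\bigcap_n F_{f \Rest n} \Sleq E$. For each node $\sigma$ I put $E(\sigma) = \bigcup_{f \Sgeq \sigma} \bigcap_n F_{f \Rest n}$, the analytic set ``below'' $\sigma$; then $E(\sigma) \Sleq F_\sigma$, $E(\nil) = E$, and $E(\sigma) = \bigcup_i E(\sigma \Conc i)$.

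Next I would set up the combinatorics of non-$\sigma$-finiteness. Call $\sigma$ \emph{heavy} if $E(\sigma)$ is not $\sigma$-finite for $\Hm^s$. Since a subset of a $\sigma$-finite set is $\sigma$-finite, heaviness is closed downward, and since a countable union of $\sigma$-finite sets is $\sigma$-finite while $E(\sigma) = \bigcup_i E(\sigma \Conc i)$, every heavy node has at least one heavy child. By hypothesis $\nil$ is heavy, so heaviness never dies out along the tree, which is what keeps the recursion below from terminating. I also record that heaviness implies $\Hm^s(E(\sigma)) = \infty$, hence $\sup_m \Hm^s_{2^{-m}}(E(\sigma)) = \infty$.

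The heart of the construction is a fusion producing a finitely branching subtree $S \Sleq \Nstr$. Writing $D_n = \bigcup\{F_\sigma \colon \sigma \in S, |\sigma| = n\}$, each $D_n$ is a finite union of closed sets, the sequence is decreasing, and $C = \bigcap_n D_n$ is closed; a König's lemma argument on the finitely branching trees $T_x = \{\sigma \in S \colon x \in F_\sigma\}$ shows $C \Sleq E$. At stage $k$ of the fusion I extend every current leaf by finitely many children, using the Increasing Sets Lemma (applied at the finitely many scales $2^{-1}, \dots, 2^{-k}$ simultaneously) to choose children whose $E(\sigma \Conc i)$ recapture the scalewise premeasure of $E(\sigma)$ up to a summable tolerance $2^{-k}$, while always retaining one heavy child so the tree has no dead ends. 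Because $\bigcup_{\sigma \in S,\, |\sigma| = n} E(\sigma) \Sleq D_n$, this control propagates to a lower bound $\lim_n \Hm^s_{2^{-m}}(D_n) \geq a_m$ with $a_m \to \infty$, the divergence coming from the heaviness of the retained spine. Finally the Decreasing Sets Lemma gives $\Hm^s_{2^{-(m+1)}}(C) \geq \tfrac12 \lim_n \Hm^s_{2^{-m}}(D_n) \geq \tfrac12 a_m$, and letting $m \to \infty$ yields $\Hm^s(C) = \infty$.

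The step I expect to be the main obstacle is the bookkeeping in the fusion: the Increasing Sets Lemma is a statement about one increasing union at one fixed scale, whereas I need to preserve the coarse-scale premeasures of the \emph{whole} level set $D_n$ as the tree is refined at deeper levels and at ever finer scales. Making the tolerances summable so that $\lim_n \Hm^s_{2^{-m}}(D_n)$ does not leak to zero, while simultaneously forcing these limits to be unbounded in $m$ (which is exactly where non-$\sigma$-finiteness, rather than mere infinite measure, is indispensable), is the delicate quantitative core; everything else is either the two quoted lemmas or routine König-compactness.
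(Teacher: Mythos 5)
Your outline is Davies' fusion argument, and modulo indexing it is the same proof as the paper's: a regular Souslin scheme of closed sets, the Increasing Sets Lemma to pass to finitely branching approximations while preserving premeasures at finitely many scales, non-$\sigma$-finiteness propagated through the tree to keep generating new scales $m_k$ with premeasure exceeding $k$, and then the Decreasing Sets Lemma plus K\"onig's Lemma to conclude. (The paper organizes the tree by the pointwise domination order, setting $E^{\sigma} = \bigcup_{f \le \sigma} \bigcap_n A_{f\Rest{n}}$ and $C_n = \bigcup_{|\tau|=n,\, \tau \le r\Rest{n}} A_\tau$; your $S$ and $D_n$ are the same objects with a uniform bound $r(k)$ at depth $k$.) Your heaviness bookkeeping is right, and you correctly isolate why non-$\sigma$-finiteness rather than mere infinite measure is needed: an infinite-measure node can have all children of finite measure, while a non-$\sigma$-finite node must have a non-$\sigma$-finite child.

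However, the step you flag as the ``delicate quantitative core'' is, as you have stated it, a genuine gap rather than bookkeeping. You apply the Increasing Sets Lemma \emph{leaf by leaf}, choosing for each leaf $\sigma$ finitely many children whose $E(\sigma\Conc i)$ recapture the premeasure of $E(\sigma)$ up to $\epsilon_\sigma$, and then assert that this control ``propagates'' to the level union $D_n$. It does not follow: $\Hm^s_\delta$ is an outer measure, only subadditive, so from $\Hm^s_\delta\bigl(\bigcup_{j \in c(\sigma)} E(\sigma\Conc j)\bigr) \ge \Hm^s_\delta\bigl(E(\sigma)\bigr) - \epsilon_\sigma$ for each leaf you cannot deduce a lower bound on $\Hm^s_\delta$ of the union over all leaves; the estimate that would make the tolerances add up, namely $\Hm^s_\delta\bigl(E(\sigma) \setminus \bigcup_{j \in c(\sigma)} E(\sigma\Conc j)\bigr) \le \epsilon_\sigma$, is false for outer measures in general. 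The repair is to make \emph{one} application of the lemma per stage to a \emph{single} increasing sequence: with $L$ the current finite set of leaves, $U_N = \bigcup_{\sigma \in L} \bigcup_{j \le N} E(\sigma\Conc j)$ increases to $\bigcup_{\sigma \in L} E(\sigma)$ as $N \to \infty$, so a uniform child bound $N$ (enlarged if necessary to include a heavy child) preserves the premeasure of the whole level simultaneously at the finitely many scales in play. This is exactly what the paper's domination-order parameterization buys automatically: $E^{r\Rest{n}}$ is the level union, and extending $r$ by one coordinate is literally a single increasing union $E^{(r\Rest{n})\Conc N} \nearrow E^{r\Rest{n}}$ as $N \to \infty$. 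With that one change your argument closes and coincides with the paper's.
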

		\begin{proof}	
				Pick a recursive relation $R(\sigma, \tau)$ such that 
			\[
				x \in E \quad \Iff \quad \exists g \in \Baire\: \forall n \in \Nat \: R(x\Rest{n}, g\Rest{n}).
			\]
			We define, for $\tau \in \Nstr$,
			\[
				A_\tau = \{x \colon \forall n \leq |\tau| \: R(x\Rest{n}, \tau\Rest{n})
			\]
			Then $\{A_\tau\}_{\tau \in \Nstr}$ forms a \emph{regular Souslin scheme} and we have
			\[
				E = \bigcup_{f \in \Baire} \bigcap_n A_{f\Rest{n}}. 
			\]
			Given $\alpha,\beta \in \Nstr\cup\Baire$, we write $\alpha \leq \beta$ if $\alpha(n) \leq \beta(n)$ for all $n \in \dom(\alpha) \cap \dom(\beta)$. Put 
			\[
				E^\sigma = \bigcup_{\substack{f \in \Baire \\ f \leq \sigma}} \bigcap_n A_{f\Rest{n}}.
			\]
			We have $E^{\Tup{n}} \nearrow E$. Choose $m_1$ so that $\Hm^s_{2^{-m_1}} E > 1$.  By the Increasing Sets Lemma, we can choose $r_1$ so that 
			\[
				\Hm^s_{2^{-m_1}} E^{\Tup{r_1}} > 1
			\]
			and $E^{\Tup{r_1}}$ is not $\sigma$-finite for $\Hm^s$, in particular $\Hm^s E^{\Tup{r_1}} = \infty$.
			The latter is possible since if there were no such $r_1$, as $E = \bigcup E^{\Tup{n}}$, $E$ would be $\sigma$-finite for $\Hm^s$, contradicting our assumption.

			Now we can continue the construction inductively. We obtain a function $r \in \Baire$  and a sequence of natural numbers $m_1 \leq m_2 \leq m_3 \leq \dots$ such that
			\begin{align*}
				 (a) & \qquad \Hm^s_{2^{-m_i}} E^{r\Rest n} > i \text{ for } 1\leq i \leq n, \\
				(b) & \qquad  E^{r\Rest n} \text{ is not $\sigma$-finite for $\Hm^s$}.
			\end{align*}

			We define 
			\[
				C_n = \bigcup_{\substack{|\tau| = n \\ \tau \leq r\Rest{n}}} A_{\tau} \qquad \text{and} \qquad C = \bigcap_n C_n.
			\]
			Note that each $C_n$, and hence $C$, is closed.
			By definition of $C_n$ we have $E^{r\Rest{n}} \subseteq C_n$ for all $n$. By $(a)$, $\Hm^s_{2^{-m_n}} C_n > n$. Moreover, $C_n \supseteq C_{n+1}$ for all $n$.
			We can hence apply the \emph{Decreasing Sets Lemma} and obtain $\Hm^s_{2^{-(m_n+1)}} C = \infty$ for all $n$, and thus $\Hm^s C = \infty$.

			It remains to show that $C \subseteq E$. Note that if $x \in C$, then for all $n$ there exists a $\tau_n \leq r\Rest{n}$ of length $n$ such that $x \in A_{\tau_n}$.
			The set of all such $\tau_n$ (for any $n$) forms an infinite, finite branching tree. Hence by König's Lemma, there exists $f \leq r$ such that $x \in \bigcap_n A_{f\Rest{n}}$, that is, $x \in E$.
		\end{proof}

		Note that, by the final argument of the preceding proof, we can write
		\[
			C = \bigcup_{g \leq r} \bigcap_n A_{g\Rest{n}} = \{ x \colon \exists g \leq r \, \forall n \: R(x\Rest{n}, g\Rest{n}) \}. 
		\]
		Note also that if $E$ is of $\sigma$-finite $\Hm^s$-measure, then the above construction may not produce, at some stage, an $r_n$ such that $E^{\Tup{r_1, \dots, r_n}}$ is of infinite $\Hm^s$-measure.
		But in this case $\Hm^s$ behaves like a finite Borel measure on a Borel set. We can then mimic the above construction, working directly with $\Hm^s$ instead of $\Hm^s_\delta$, and obtain a closed subset of positive $\Hm^s$-measure.

		Combining both cases, we obtain the following corollary.

		\begin{cor} \label{BD-cor}
			For each $\S^1_{1}$ class $E$ of non-zero $\Hm^s$-measure, written in canonical form
			\[
			   E = \{x \colon \exists g \,\forall n R(x\Rest{n},g\Rest{n})\}
			\]
			where $R$ is a recursive predicate, there exists a function $r\in \Baire$ such that for each $f$ majorizing $r$, the class
			\[
				C_{f}:=\{x \colon  \exists g \leq f \, \forall n \, R(x\Rest{n},g\Rest{n})\}
			\]
			is a $\P^0_1(f)$ subclass of non-zero $\Hm^s$-measure.
		\end{cor}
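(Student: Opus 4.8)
The plan is to package Theorem~\ref{Davies} together with the two remarks that immediately follow its proof, handling the non-$\sigma$-finite and $\sigma$-finite cases separately, and then to verify the two assertions about $C_f$ — that it is $\P^0_1(f)$ and that it has non-zero measure — for every $f$ majorizing the single function $r$ produced by the construction.

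First I would produce $r$. If $E$ is not $\sigma$-finite for $\Hm^s$, then Theorem~\ref{Davies} applies directly and yields a function $r \in \Baire$ together with a closed set $C = \bigcap_n C_n \subseteq E$ of infinite $\Hm^s$-measure; by the displayed remark after the proof this $C$ is exactly $C_r = \{x \colon \exists g \leq r \, \forall n \, R(x\Rest{n}, g\Rest{n})\}$. If instead $E$ is $\sigma$-finite, I would invoke the second remark: running the same stagewise construction but with $\Hm^s$ in place of $\Hm^s_{2^{-m}}$ produces, via the identical König's-Lemma argument, a closed set of the same canonical form $C_r$ with $0 < \Hm^s(C_r) < \infty$. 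Either way I obtain a single $r$ for which $C_r \subseteq E$ is closed and of non-zero $\Hm^s$-measure.

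Next I would establish the definability claim for an arbitrary $f \geq r$. The key point is that the existential quantifier defining $C_f$ ranges only over $g \leq f$, hence over paths through the finitely branching tree $T_x = \{\tau \in \Nstr \colon \tau \leq f\Rest{|\tau|} \text{ and } \forall m \leq |\tau|\, R(x\Rest{m}, \tau\Rest{m})\}$, which is closed under prefixes. By König's Lemma $T_x$ has an infinite path iff it has a node of every length, so
\[
	x \in C_f \Iff \forall n\, \exists \tau\ \bigl(|\tau| = n \ \wedge\ \tau \leq f\Rest{n} \ \wedge\ \forall m \leq n\, R(x\Rest{m}, \tau\Rest{m})\bigr).
\]
For fixed $n$ the matrix is a finite search bounded by $f\Rest{n}$ and depends only on $x\Rest{n}$, so it is clopen and uniformly decidable from $f$; the outer $\forall n$ then exhibits $C_f$ as an $f$-recursive intersection of clopen sets, i.e.\ as a $\P^0_1(f)$ class. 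In particular $C_f$ is closed, and trivially $C_f \subseteq E$, since any $g \leq f$ witnessing membership in $C_f$ also witnesses membership in $E$.

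Finally, for the measure claim I would use monotonicity. Since $r \leq f$, every witness $g \leq r$ is also $\leq f$, so $C_r \subseteq C_f$, whence $\Hm^s(C_f) \geq \Hm^s(C_r) > 0$. I expect the only genuinely delicate step to be the definability computation: one must recognize that the coded bound $g \leq f$ makes the witnessing tree finitely branching, so that König's Lemma collapses the apparently $\S^1_1$ existential quantifier into the $\P^0_1(f)$ condition displayed above — this collapse is exactly what forces $C_f$ to be closed rather than merely analytic. The $\sigma$-finite case is then a bookkeeping matter, resting on the cited remark that the construction goes through with $\Hm^s$ replacing $\Hm^s_\delta$ while preserving the canonical form of $C_r$.
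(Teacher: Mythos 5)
Your proposal is correct and takes essentially the same route as the paper: the paper obtains the corollary precisely by ``combining both cases,'' i.e.\ Theorem~\ref{Davies} for non-$\sigma$-finite $E$ together with the two remarks after its proof (the identification $C = C_r = \{x \colon \exists g \leq r\, \forall n\, R(x\Rest{n}, g\Rest{n})\}$, and the $\sigma$-finite case handled by running the construction with $\Hm^s$ in place of $\Hm^s_\delta$). Your explicit König's-Lemma verification that $C_f$ is $\P^0_1(f)$ and the monotonicity argument $C_r \subseteq C_f$ for arbitrary $f$ majorizing $r$ just spell out steps the paper leaves implicit, and both are sound.
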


	\section{Index set complexity}

		In this section we determine the index set complexity of the following problem:
		\begin{quote}
			\em Given an index of an (effectively) analytic set $E \subseteq \Cant$, how hard is it to decide whether $E$ has non-zero (or finite) $\Hm^s$-measure?
		\end{quote}
		In our analysis we will always assume that $s$ is rational. This avoids technical complications arising from non-computable $s$ (which can be addressed by working relative to an oracle representing $s$).

		Initially, one may think that the computational difficulty in determining whe\-ther a set of reals has \emph{positive Hausdorff measure} could be similar to the difficulty in determining whether it has \emph{positive Lebesgue measure},
		but we find that it is more similar to the determining whether it is \emph{non-empty} -- and this is more difficult than the measure question.
		While questions about Lebesgue measure can often be answered using an arithmetical oracle, for non-emptiness we often have to go beyond even the hyperarithmetical.
		As we shall see, this level of difficulty first arises at the $G_{\delta}$ ($\mathbf{\P^{0}_{2}}$) level;
		we start by going over the simpler cases of open ($\mathbf{\S^{0}_{1}}$), closed ($\mathbf{\P^{0}_{1}}$), and $F_{\sigma}$ ($\mathbf{\S^{0}_{2}}$) sets.

		\begin{pro}\label{s01}
			For any rational $0 < s < 1$, the following families are identical, and have $\S^0_1$-complete index sets.
			\begin{enumerate}[(a)]
				\item $\S^0_1$ classes that are nonempty;
				\item $\S^0_1$ classes that have non-zero $s$-dimensional Hausdorff measure;
				\item $\S^0_1$ classes that have non-zero Lebesgue measure.
			\end{enumerate}
		\end{pro}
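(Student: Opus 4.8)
The plan is to observe that, for open subsets of $\Cant$, the three conditions (a)--(c) do not merely happen to be equivalent but literally describe the same collection of sets, and then to read off the index-set complexity directly from the defining formula for nonemptiness.

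First I would fix the standard coding of $\S^0_1$ classes: let $W_e \Sleq \Str$ be the $e$-th c.e.\ set of strings and put $U_e = \bigcup_{\sigma \in W_e} \Cyl{\sigma}$, so that $\{U_e\}$ enumerates the $\S^0_1$ subclasses of $\Cant$. The geometric heart of the matter is that a nonempty open set contains a basic cylinder $\Cyl{\sigma}$, and every such cylinder has positive measure in all three senses. For Lebesgue measure this is immediate, since $\lambda(\Cyl{\sigma}) = 2^{-|\sigma|} > 0$. For $\Hm^s$ with $0 < s < 1$ I would invoke the mass distribution principle together with the ultrametric structure of $\Cant$: any set of diameter $2^{-k}$ is contained in the length-$k$ cylinder $\Cyl{\tau}$ cut out by its common prefix, a cylinder of the same diameter, and as $s \le 1$ we have $\lambda(\Cyl{\tau}) = 2^{-k} \le 2^{-ks} = (\operatorname{diam}\Cyl{\tau})^s$. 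Replacing each member of an arbitrary small-diameter cover of $\Cyl{\sigma}$ by the corresponding cylinder then gives $\sum_i (\operatorname{diam} V_i)^s \ge \sum_i \lambda(\Cyl{\tau_i}) \ge \lambda(\Cyl{\sigma})$, whence $\Hm^s(\Cyl{\sigma}) \ge 2^{-|\sigma|} > 0$. Conversely, a set of positive measure in any of the three senses is certainly nonempty, so (a), (b), and (c) all single out exactly the nonempty $\S^0_1$ classes.

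For the index-set claim I would note that $U_e \neq \nil$ iff $W_e$ contains at least one string, i.e.\ iff $\exists \sigma\,(\sigma \in W_e)$; since membership in $W_e$ is $\S^0_1$ uniformly in $e$, this is a $\S^0_1$ predicate of $e$, so each of the three (coinciding) index sets is $\S^0_1$. For $\S^0_1$-hardness I would reduce from the $\S^0_1$-complete set $\{e : W_e \neq \nil\}$: by the s-m-n theorem there is a recursive $g$ such that $W_{g(e)}$ enumerates the empty string $\Estr$ (so that $U_{g(e)} = \Cant$) exactly when some string enters $W_e$, whence $U_{g(e)} \neq \nil \Iff W_e \neq \nil$. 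Composed with the equivalences of the previous paragraph, the same map reduces to families (b) and (c) as well.

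The content here is light, as befits the simplest case; the only step needing care is the lower bound $\Hm^s(\Cyl{\sigma}) > 0$, and even that is standard for the ultrametric space $\Cant$ (cf.\ the result of Rogers for ultrametric spaces cited in the introduction). I therefore expect no genuine obstacle — the point to get right is simply to phrase the mass distribution estimate so that the ultrametric property lets one pass from arbitrary covers to cylinder covers without loss.
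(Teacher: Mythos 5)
Your proposal is correct and follows essentially the same route as the paper: all three families coincide because a nonempty open set contains a cylinder of positive measure in every sense, the index set is $\S^0_1$ via the existential condition $\exists \sigma\,(\sigma \in W_e)$, and hardness comes from a Rice-style reduction. The only differences are cosmetic --- you prove $\Hm^s(\Cyl{\sigma}) > 0$ from scratch via the cylinder-cover estimate (the paper simply cites the standard fact that positive Lebesgue measure forces infinite $\Hm^s$-measure for $s<1$), and you spell out the s-m-n reduction where the paper invokes Rice's Theorem.
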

		\begin{proof}
			Given a set $W_e \subseteq \Str$, let $\Cyl{W} = \bigcup_{\sigma \in W} \Cyl{\sigma}$, where $\Cyl{\sigma} = \{x \in \Cant \colon \sigma \subset x \}$.
			Since any non-empty open set has positive Lebesgue measure, and having positive Lebesgue measure implies having infinite $\Hm^s$-measure for any $s< 1$, the three statements are equivalent.
			Any $\S^0_1$ class is given as $\Cyl{W_e}$ for some c.e.\ set $W_e$. The corresponding index sets are c.e.\ since
			$\Cyl{W_e} \neq \emptyset$ if and only if $W_e \neq \emptyset$ if and only if $\exists s,\sigma  \; \varphi_{e,s}(\sigma)\downarrow$, and they are complete by Rice's Theorem.
		\end{proof}

		Next, we compare the cases of $\P^{0}_{1}$ classes. It turns out deciding whether a $\P^0_1$ class has positive Lebesgue or Hausdorff measure is only slightly more complicated than deciding whether it is non-empty. 

		In the following, we let $T_e$ be the $e$-th recursive tree, 
		\[
			T_e = \{ \sigma \colon \forall \tau \Sleq \sigma \; \phi_{e,|\sigma|}(\tau) \uparrow \}.
		\]

		\begin{pro}\label{p01left}
			The set of indices of $\P^{0}_{1}$ classes that are nonempty is $\P^{0}_{1}$-complete.
		\end{pro}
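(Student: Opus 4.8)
The plan is to show that the index set $I=\{e\colon P_e\neq\nil\}$, where $P_e=\{x\in\Cant\colon\forall n\;x\Rest{n}\in T_e\}$ is the $\P^0_1$ class coded by the $e$-th recursive tree $T_e$, is simultaneously a member of $\P^0_1$ and $\P^0_1$-hard, hence $\P^0_1$-complete.

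For the upper bound, first observe that the predicate ``$\sigma\in T_e$'' is recursive in $(e,\sigma)$: by the definition of $T_e$ one need only run $\phi_e$ for $|\sigma|$ steps on each of the finitely many $\tau\Sleq\sigma$. Since $\Str$ is finitely branching, König's Lemma shows that $P_e\neq\nil$ exactly when $T_e$ is infinite, and (using that $T_e$ is downward closed) this happens exactly when $T_e$ contains a string of every length. As there are only $2^n$ strings of length $n$, the predicate $Q(e,n):=$``$T_e$ has a node of length $n$'' is recursive, being a bounded quantification of a recursive predicate. Thus $e\in I\iff\forall n\,Q(e,n)$, which is $\P^0_1$.

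For hardness I would reduce the $\P^0_1$-complete set $\overline K=\{n\colon\phi_n(n)\uparrow\}$ to $I$. Given $n$, use the $s$-$m$-$n$ theorem to compute an index $f(n)$ for which $T_{f(n)}$ is the tree consisting precisely of the initial segments of the all-zero path $0^\omega$, pruned at the stage where $\phi_n(n)$ halts (if ever). Concretely, design the auxiliary machine $\phi_{f(n)}$ so that it halts on every string containing a $1$, and halts on a string $0^k$ just in case $\phi_n(n)$ converges within $k$ steps; this forces $0^k\in T_{f(n)}$ iff $\phi_n(n)$ has not yet halted by stage $k$, and excludes every string off the zero-branch. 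If $\phi_n(n)\uparrow$ then $T_{f(n)}$ is infinite and $0^\omega\in P_{f(n)}$, while if $\phi_n(n)\downarrow$ then $T_{f(n)}$ is finite and $P_{f(n)}=\nil$. Hence $n\in\overline K\iff f(n)\in I$, establishing $\overline K\leq_m I$ and completing the argument.

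The routine but essential care point is matching the specific enumeration form of $T_e$: one must check that the machine $\phi_{f(n)}$ produced by $s$-$m$-$n$ prunes exactly the intended nodes given the $|\sigma|$-step bound built into the definition of $T_e$, and that the resulting set is genuinely a tree (downward closed). No single step is a real obstacle; the only thing to watch is that the step-counting in ``$\phi_{e,|\sigma|}$'' lines the truncation up at the correct level, so that $P_{f(n)}$ is nonempty precisely on $\overline K$.
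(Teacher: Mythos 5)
Your proof is correct and follows essentially the same route as the paper: the $\P^0_1$ upper bound via the observation that $[T_e]\neq\nil$ iff the (downward closed) tree $T_e$ has a node at every level, which is a universal quantification of a decidable predicate, and hardness via a Rice-style reduction. The paper simply cites Rice's Theorem for the hardness direction, whereas you unfold it into the explicit $s$-$m$-$n$ reduction from $\overline K$ (correctly flagging the step-counting alignment in the definition of $T_e$); this is the same argument in expanded form.
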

		\begin{proof}
			A tree $T$ does not have an infinite path if and only if for some level $n$, no string of length $n$ is in $T$.
			If $T$ is recursive, the latter event is c.e.\ and hence the set $\{e \colon [T_e] \neq\emptyset \}$ is $\P^{0}_{1}$. It is  $\P^{0}_{1}$-hard by Rice's Theorem.
		\end{proof}

		\begin{pro}
			The set of indices of $\P^{0}_{1}$ classes that have positive Lebesgue measure is $\S^{0}_{2}$-complete.
		\end{pro}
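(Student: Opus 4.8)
The plan is to treat the two halves of the completeness claim separately: first that the index set lies in $\S^0_2$, then that it is $\S^0_2$-hard.

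For the upper bound, I would work directly with the tree presentation. Write $T_{e,n} = \{\sigma \in T_e \colon |\sigma| = n\}$ for the $n$-th level, and let $U_n = \bigcup_{\sigma \in T_{e,n}} \Cyl{\sigma}$. Since every string of length $n+1$ in a tree extends a string of length $n$ in the tree, the clopen sets satisfy $U_{n+1} \Sleq U_n$, so $[T_e] = \bigcap_n U_n$ with the $U_n$ decreasing; hence $\lambda([T_e]) = \lim_n |T_{e,n}|\,2^{-n} = \inf_n |T_{e,n}|\,2^{-n}$, and the sequence $a_n = |T_{e,n}|\,2^{-n}$ is non-increasing. Because membership in $T_e$ is uniformly recursive, $|T_{e,n}|$ is a recursive function of $(e,n)$. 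A non-increasing sequence has positive infimum exactly when some positive rational lies below every term, so
\[
	\lambda([T_e]) > 0 \quad \Iff \quad \exists q \in \mathbb{Q},\ q > 0\ \ \forall n\ \ |T_{e,n}| \geq q\,2^n .
\]
The matrix is recursive and the prefix is $\exists\forall$, so the index set is $\S^0_2$. The point I would emphasize is that monotonicity of $U_n$ is what keeps this an $\exists\forall$ statement rather than one level higher.

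For hardness I would reduce $\mathrm{Fin} = \{e \colon W_e \text{ is finite}\}$, the standard $\S^0_2$-complete set. Given $e$, re-time the enumeration of $W_e$ so that at most one new element enters per stage, and let $J_e = \{n \colon$ an element enters $W_e$ at stage $n\}$, a uniformly recursive set with $|J_e| = |W_e|$. Define
\[
	S_e = \{ \sigma \in \Str \colon (\forall n < |\sigma|)\,(n \in J_e \implies \sigma(n) = 0) \},
\]
a recursive tree that forces the bit to $0$ at each ``jump'' position. Then $[S_e] = \{ x \colon (\forall n \in J_e)\ x(n) = 0 \}$, so $\lambda([S_e]) = 2^{-|J_e|} = 2^{-|W_e|}$, which is positive precisely when $W_e$ is finite. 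Thus each element of $W_e$ halves the surviving measure exactly once, faithfully recording finiteness as positivity of the measure.

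Finally I would convert $S_e$ into the paper's indexing. Dropping the time bound in the definition of $T_i$ gives $[T_i] = \{ x \colon (\forall \tau \Sle x)\ \phi_i(\tau)\uparrow \}$, so if I define $\phi_i$ to halt exactly on the strings \emph{outside} the recursive tree $S_e$, then $[T_i] = [S_e]$; by the $s$-$m$-$n$ theorem this index depends recursively on $e$, yielding a total recursive $f$ with $\lambda([T_{f(e)}]) > 0 \Iff e \in \mathrm{Fin}$. Combined with the first paragraph this gives $\S^0_2$-completeness. I do not expect a single hard step; the care lies in lining up the two ends — using the decreasing clopen approximations to pin the upper bound at $\S^0_2$, and calibrating the tree so that its measure is multiplicatively halved once per element of $W_e$ so that $\lambda([S_e])$ exactly tracks the (in)finiteness of $W_e$.
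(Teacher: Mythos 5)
Your proof is correct and follows essentially the same route as the paper: the upper bound via the characterization $\exists k\,\forall n\ |T_{e,n}|\geq 2^{n-k}$ (the paper's $\exists n\,\forall m\ |T^m|\geq 2^{m-n}$, justified by continuity of measure on the decreasing clopen approximations), and hardness by reducing $\operatorname{Fin}$ with a tree whose measure is halved once per element entering $W_e$. Your implementation differs only cosmetically (forcing bit $0$ at retimed entry stages rather than pruning children at the construction stage, plus an explicit $s$-$m$-$n$ conversion to the paper's indexing, which the paper leaves implicit).
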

		\begin{proof}
			Given a tree $T$, $[T]$ has positive Lebesgue measure if and only if $$\exists n \forall m \; |T^m | \geq 2^{m-n},$$ where $T^m = T \cap \{0,1\}^m$. This follows from the dominated convergence theorem.
			Hence the corresponding index set is $\S^0_2$.
			One can reduce the $\S^0_2$ complete set $\operatorname{Fin} = \{e \colon W_e \text{ finite} \}$ to it by effectively building, for each $e$, a tree $T_e$ such that if and only if a given $W_e$ is finite, the measure is positive.
			This is achieved by cutting the measure in half (i.e.\ terminating an appropriate number of nodes) whenever another number enters $W_e$.
			In detail, there exists (by the Church-Turing thesis) a recursive function $f$ such that $T^0_{f(e)} = \{\epsilon\}$ and 
			\[
				T^{s+1}_{f(e)} = \{\sigma\Conc i \colon \sigma \in T^s_{f(e)}, \: i \leq 1 \overset{{ }_\bullet}{-} |W_{e,s+1}\setminus W_{e,s}| \}.
			\]
			This $f$ is a many-one reduction from $\operatorname{Fin}$ to the set $\{e \colon [T_e] \text{ has positive Lebesgue measure}$
		\end{proof}

		\begin{thm}\label{p01}
			For any rational $0<s<1$, the set of indices of $\P^{0}_{1}$ classes of non-zero $\Hm^s$-measure is $\S^{0}_{2}$-complete. 
		\end{thm}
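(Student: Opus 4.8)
The plan is to prove this by showing the index set is both $\S^0_2$ (membership) and $\S^0_2$-hard (completeness), exactly paralleling the preceding proposition for Lebesgue measure. The key structural fact, already supplied by Proposition~\ref{s01}, is that for a $\P^0_1$ class (equivalently, an intersection of the nested open classes $[T^n]$ fattened appropriately), positive Lebesgue measure forces infinite $\Hm^s$-measure for $s<1$, while positive $\Hm^s$-measure is a strictly weaker condition. So I cannot simply reuse the Lebesgue argument verbatim; instead I must characterize positive $\Hm^s$-measure of $[T_e]$ in an arithmetically bounded way.

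For the upper bound, first I would observe that for a recursive tree $T$, the quantity $\Hm^s_{2^{-m}}[T]$ is essentially determined by the covering of $[T]$ by the cylinders $\Cyl{\sigma}$ with $|\sigma|=m$ that remain in $T$, so that $\Hm^s_{2^{-m}}[T]$ is comparable to $\sum_{|\sigma|=m,\ \sigma\in T} 2^{-sm} = |T^m|\cdot 2^{-sm}$. Since $\Hm^s[T]=\lim_m \Hm^s_{2^{-m}}[T]$ and this limit is a supremum (the $\Hm^s_\delta$ increase as $\delta\to 0$), positivity of $\Hm^s[T]$ should be equivalent to an assertion of the form $\exists n\,\forall m\ |T^m|\geq 2^{(s m)-n}$, or more carefully a statement bounding the $s$-dimensional mass of the level-$m$ cut from below by a fixed positive constant for all $m$. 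This yields a $\S^0_2$ predicate: one existential (the witness $n$ to positivity) over a $\P^0_1$ matrix ($\forall m$ of a recursive condition on $|T^m|$). The delicate point is that a uniform lower bound on $|T^m|2^{-sm}$ captures only a Frostman-type mass distribution, and I must confirm it is genuinely equivalent to $\Hm^s[T]>0$; here I would invoke the mass distribution principle together with the ultrametric structure of $\Cant$, which makes the efficient covers exactly the cylinder covers, to get the clean equivalence.

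For the lower bound ($\S^0_2$-hardness), I would again reduce from $\operatorname{Fin}=\{e\colon W_e\text{ finite}\}$. The idea is to build, recursively and uniformly in $e$, a tree $T_{f(e)}$ whose branching is throttled by activity in $W_e$: whenever a new element enters $W_e$, I prune the tree so as to cut its $s$-dimensional mass at that level, in analogy with the halving construction in the Lebesgue proof, but calibrated to $\Hm^s$ rather than $\lambda$. Concretely, if $W_e$ is finite then pruning ceases after finitely many stages and $[T_{f(e)}]$ retains a positive $s$-dimensional mass (a full enough subtree whose level counts satisfy $|T^m|\geq c\,2^{sm}$); if $W_e$ is infinite then infinitely many prunings drive $|T^m|2^{-sm}\to 0$, forcing $\Hm^s[T_{f(e)}]=0$. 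The arithmetic of ensuring the branching factor needed to sustain $2^{sm}$ growth is the technical crux: because $s=a/b$ is rational I can realize the required geometric growth rate by working in blocks of $b$ levels and using the canonical $s$-join machinery from Proposition~\ref{pro:hmeas-join} to guarantee the surviving subtree has non-zero $\Hm^s$-measure.

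**The main obstacle** I expect is the calibration in the hardness construction: I must prune enough to kill all $\Hm^s$-measure when $W_e$ is infinite, yet not so aggressively that a single late pruning destroys the positive measure in the finite case. In the Lebesgue proof each pruning simply halves the measure, and finiteness of $W_e$ leaves a positive residue automatically; for $\Hm^s$ the same halving leaves the $s$-dimensional measure positive as well, so the surviving subtree must be arranged (via block-wise branching at rate $2^{s}$ per level) to have exactly the borderline growth that is positive under finitely many cuts but collapses under infinitely many. Verifying that this borderline behavior matches the $\exists n\,\forall m$ characterization from the upper bound, so that the reduction is provably correct, is where the real work lies.
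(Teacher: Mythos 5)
Your hardness half is essentially the paper's reduction and is sound: reduce from $\operatorname{Fin}$, throttle the branching of a uniformly recursive tree whenever a new element enters $W_e$, and in the finite case land on a set bi-Lipschitz equivalent to $\Cant \join[s] \{0\}$, which has positive $\Hm^s$-measure by Proposition~\ref{pro:hmeas-join}. The direction of the level-count heuristic that this half needs is also true: since $\Hm^s_{2^{-m}}[T] \leq |T^m|\,2^{-sm}$ and $\Hm^s_{2^{-m}}[T]$ increases to $\Hm^s[T]$, already $\liminf_m |T^m|\,2^{-sm} = 0$ forces $\Hm^s[T]=0$, so infinitely many prunings kill the measure.

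The genuine gap is in your upper bound. The proposed characterization $\Hm^s[T]>0 \iff \exists n\,\forall m\ |T^m| \geq 2^{sm-n}$ is false from right to left, and no appeal to the mass distribution principle can repair it: a lower bound on level counts is a box-counting condition, not a Frostman condition, and does not yield a measure with $\mu(\Cyl{\sigma}) \leq C\,2^{-s|\sigma|}$. Concretely, the recursive tree $T$ of all $\sigma$ whose first $1$ (if any) occurs at a position $k$ with $|\sigma| \leq 2k+1$ has $[T]=\{000\cdots\}$, a single point, yet $|T^m| \geq 2^{\lfloor (m-1)/2 \rfloor}$, so for $s<1/2$ it satisfies your condition while $\Hm^s[T]=0$; padding with more initial branching gives counterexamples for every $s<1$. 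Restricting to extendible nodes does not help either (besides making the matrix non-recursive): the union of two closed sets, each constrained to be $0$ on alternating, rapidly lengthening blocks of coordinates, has near-maximal level counts at every level but Hausdorff dimension $0$, because efficient covers use cylinders of \emph{different} lengths for different parts of the set, which a single level cut cannot see. This is exactly why the Lebesgue argument you are paralleling fails to transfer: $\lambda[T]=\lim_m |T^m|\,2^{-m}$ by continuity from above, but $\Hm^s[T]$ is not determined by the sequence $|T^m|\,2^{-sm}$. The paper's proof avoids the issue by characterizing the null case instead: $\Hm^s[T]=0$ iff $\forall m\,\exists n,k\ \Hm^s_{2^{-n}}\Cyl{T^k} < 2^{-m}$, where replacing $[T]$ by its finite-level approximations $\Cyl{T^k}$ is justified by the Decreasing Sets Lemma, and each inner condition is decidable because only finitely many covers by cylinders $\sigma$ with $n \leq |\sigma| \leq k$ must be checked --- covers of mixed cylinder lengths, precisely the feature your invariant discards. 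This puts measure zero in $\P^0_2$, hence positive measure in $\S^0_2$, completing the upper bound correctly.
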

		\begin{proof}
			Given a tree $T$, $\Hm^s[T] = 0$ if and only if 
			\[
				\forall m \, \exists n \: \Hm^s_{2^{-n}} [T] < 2^{-m}. 
			\]
			By the Decreasing Sets Lemma, the latter is equivalent to
			\[
				\forall m \, \exists n,k \: \Hm^s_{2^{-n}} \Cyl{T^k} < 2^{-m}.
			\]
			The property $\Hm^s_{2^{-n}} \Cyl{T^k} < 2^{-m}$ however, is decidable:
			One has to check only a finite number of covers - in case $n \leq k$ any set $U \subseteq T \cap \Str$ such that $\Cyl{U} \supseteq \Cyl{T^m}$ and for all $\sigma \in U$, $n \leq |\sigma| \leq k$,
			and only the cover $\{\sigma \colon |\sigma| = n \text{ and } \sigma \text{ extends some } \tau \in T^k\}$ if $n > k$. It follows that the set
			\[
				\{e \colon [T_e] \text{ has non-zero $\Hm^s$-measure} \}
			\]
			is $\S^0_2$.

			We can again reduce $\operatorname{Fin}$ to this set to show it is $\S^0_2$-complete. This time the idea is to control the branching rate of a Cantor set.
			Whenever a new element enters $W_e$, we delay the next branching for a long time. Let $s = a/b$, $a,b$ relatively prime. 

			Define a recursive set $A \subseteq \Nat$ as follows: Set $l_0 = 0$ and $A\Rest{l_0} = \Estr$. Given $A\Rest{l_s}$, let
			\[
				l_{s+1} = \begin{cases}
					l_{s} + b & \text{if } W_{e,s+1}\setminus W_{e,s} = \emptyset, \\
					2^{l_{s}}       & \text{otherwise. } 
				\end{cases}	
			\]
			Put $A(i) = 1$ for all $l_s \leq i < a$ and $A(j) = 0$ for $a \leq j < l_{s+1}$. Finally define 
			\[
				C_{f(e)} = \Cant \join[A] \{0\},
			\]
			where $0$ denotes the real that is zero at all positions.

			If $W_e$ is infinite, then $A$ has large gaps, and it is not hard to see that in this case $C_{f(e)}$ has $\Hm^s$-measure $0$.
			If $W_e$ is finite, on the other hand, $C_{f(e)}$ is bi-Lipschitz equivalent to $\Cant \join[s] \{0\}$, and the latter set has positive $\Hm^s$-measure by Proposition \ref{pro:hmeas-join},
			a property that is preserved under bi-Lipschitz equivalence (see e.g.\ \cite{falconer:1990})
		\end{proof}

		Next, we look at the question whether a $\P^0_1$ class has \emph{finite} Hausdorff measure.
		It turns out this question is $\S^0_3$-complete, and hence indicates that finding closed subsets of finite measure is strictly more difficult in the case of Hausdorff measures than for Lebesgue measure.
		It is crucial here that Hausdorff measures are \emph{not} $\sigma$-finite.

		\begin{thm}
			For any rational $0<s<1$, the set of indices of $\P^{0}_{1}$ classes of finite $\Hm^s$-measure is $\S^{0}_{3}$-complete.
		\end{thm}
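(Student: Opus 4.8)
The plan is to prove the two matching bounds separately: first that the index set is $\S^0_3$, and then that it is $\S^0_3$-hard by reducing the cofiniteness problem $\operatorname{Cof} = \{e \colon W_e \text{ cofinite}\}$, which is a standard $\S^0_3$-complete set.

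For the upper bound I would use the identity $\Hm^s[T] = \sup_m \Hm^s_{2^{-m}}[T]$ to write, for a recursive tree $T$,
\[
   \Hm^s[T] < \infty \quad\Iff\quad \exists q \in \Nat \; \forall m \; \Hm^s_{2^{-m}}[T] < q,
\]
so that finiteness is exactly boundedness by some rational $q$ at every scale. The key point is that the inner predicate ``$\Hm^s_{2^{-m}}[T] < q$'' is $\S^0_1$, uniformly in $(e,m,q)$: since $[T]$ is compact and the space is ultrametric, it holds iff there is a finite set $F$ of strings, each of length $\geq m$, with $\sum_{\sigma \in F} 2^{-s|\sigma|} < q$ and $[T] \subseteq \bigcup_{\sigma \in F} \Cyl{\sigma}$. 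The weight inequality is decidable, and the covering condition is c.e.: $[T] \subseteq \bigcup_{\sigma \in F}\Cyl{\sigma}$ \emph{fails} exactly when the recursive subtree $S = \{\tau \in T \colon \text{no } \sigma\in F \text{ is an initial segment of } \tau\}$ is infinite, so covering holds iff $S$ is finite, a c.e.\ event. Hence finiteness has the form $\exists\,\forall\,\exists$, i.e.\ it is $\S^0_3$.

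For hardness I would reduce $\operatorname{Cof}$. Given $e$, I build a recursive tree $T_{f(e)}$ whose paths split into disjoint clopen pieces: inside the cylinder $\Cyl{0^n\Conc 1}$ I place a closed set $C_n$ (the remaining path being $0^\infty$), arranged so that
\[
   \Hm^s(C_n) = \begin{cases} 1 & n \notin W_e, \\ 0 & n \in W_e. \end{cases}
\]
Since the $C_n$ lie in disjoint Borel sets and $\Hm^s$ is countably additive on Borel sets, $\Hm^s[T_{f(e)}] = \sum_n \Hm^s(C_n) = |\,\overline{W_e}\,|$, which is finite iff $W_e$ is cofinite. To construct $C_n$ recursively and uniformly, I grow a subtree below $0^n\Conc 1$ that first branches fully for about $\tfrac{s}{1-s}(n+1)$ levels — compensating for the factor $2^{-s(n+1)}$ lost to the depth of the cylinder, so that the accumulated $s$-weight is of order $1$ — and thereafter branches at asymptotic density $s$ (via an $A$-join with $|A \cap \{0,\dots,\ell-1\}| \approx s\ell$), so that the weight stabilizes; this yields $\Hm^s(C_n)$ bounded between positive constants, with positivity supplied by the mass-distribution estimate underlying Proposition \ref{pro:hmeas-join}. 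The construction responds to the enumeration of $W_e$: as soon as $n$ appears in $W_{e,t}$ I freeze the $n$-th subtree (every live node continues along $0^\infty$ only), leaving a finite set, so $\Hm^s(C_n) = 0$. All membership questions for $T_{f(e)}$ are decidable with the level as a stage bound, so $f$ is recursive.

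The main obstacle is the piece construction, where one must simultaneously (i) compensate for the geometric shrinking of the cylinders $\Cyl{0^n\Conc 1}$ so that every surviving piece contributes a uniformly positive and uniformly bounded amount of measure — without this compensation the contributions would be summable regardless of $e$ — and (ii) ensure the measure collapses to exactly $0$ once $n$ enters $W_e$. Part (i) carries the quantitative content: the upper bound on $\Hm^s(C_n)$ comes from the natural level-covers, but the lower bound $\Hm^s(C_n) \geq 1$ requires a genuine mass-distribution argument, for which I would invoke the estimates behind Proposition \ref{pro:hmeas-join} together with bi-Lipschitz invariance (cf.\ \cite{falconer:1990}) rather than recomputing them. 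Granting these estimates, the equivalence $e \in \operatorname{Cof} \iff \Hm^s[T_{f(e)}] < \infty$ is immediate, giving $\S^0_3$-hardness and hence, with the upper bound, $\S^0_3$-completeness.
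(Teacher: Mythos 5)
Your proposal is correct, and while your upper bound is essentially the paper's (compactness reduces finiteness of $\Hm^s[T]$ to an $\exists\,\forall\,\exists$ statement over finite covers --- you are in fact more careful than the paper, which leaves the covering condition implicit, whereas you verify it is c.e.\ via finiteness of the recursive residual subtree $S$), your hardness reduction takes a genuinely different route. The paper also reduces $\operatorname{Cof}$, but via a \emph{single} Cantor-like set $\Cant \join[B] \{0\}$ for a co-r.e.\ set $B$ that adds a full block of free coordinates for each $n \notin W_e$: if $W_e$ is cofinite the set is bi-Lipschitz equivalent to $\Cant \join[s] \{0\}$ and has finite $\Hm^s$-measure, while infinitely many extra blocks force infinite $\Hm^s$-measure by comparison with a generalized measure $\Hm^h$, $h(2^{-n}) = 2^{-(sn+\alpha(n))}$, $\alpha(n)\to\infty$. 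That argument needs only bi-Lipschitz invariance and a qualitative dimension-function comparison, with no per-piece quantitative bounds. Your additive decomposition into clopen pieces $C_n \subseteq \Cyl{0^n\Conc 1}$, killed upon enumeration, is more elementary, but it stands or falls with the \emph{uniform-in-$n$} lower bound $\Hm^s(C_n) \geq c > 0$, and here one caution: the Hausdorff \emph{content} of $C_n$ is only $2^{-s(n+1)}$ (the single cylinder $\Cyl{0^n\Conc 1}$ covers it), so neither the bare statement of Proposition \ref{pro:hmeas-join} nor naive scaling suffices; you must apply the mass-distribution estimate only to covers of mesh below $2^{-(n+1+m_n)}$, i.e.\ past the full-branching block, where the uniform measure on $C_n$ satisfies $\mu(\Cyl{\sigma}) \leq C' 2^{-s|\sigma|}$ with $C'$ independent of $n$, yielding $\Hm^s(C_n) = \lim_{\delta \to 0}\Hm^s_\delta(C_n) \geq 1/C'$. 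Your phrase ``the estimates behind Proposition \ref{pro:hmeas-join}'' is exactly the right instinct, and with that scale restriction made explicit the reduction is sound; your compensation count $m_n \approx \tfrac{s}{1-s}(n+1)$, the countable additivity of $\Hm^s$ on disjoint Borel sets, and the recursiveness of $T_{f(e)}$ are all correct as stated.
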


		\begin{proof}
			Given a tree $T$, $[T]$ has finite $\Hm^s$-measure if and only if
			\[
				\exists c \: \forall n \: \exists \text{ finite } F \subset T  \; \; [\text{ all $\sigma$ have length $\geq n$ and } \sum_{\sigma \in F} 2^{-s|\sigma|} < c ].
			\]	
			(It suffices to consider only finite covers since $[T]$ is compact.) Hence 
			\[
				\{e \colon [T_e] \text{ has finite $\Hm^s$-measure} \}
			\]
			is $\S^0_3$.
	
			We show it is $\S^0_3$-complete by reducing the set $\operatorname{Cof} = \{e \colon W_e \text{ is cofinite} \}$
			to it.
	
			Suppose $s = a/b$, where $a,b$ are relatively prime. Let $A = \{ bn + i \colon n \in \Nat, i < a  \}$. We define the co-r.e.\ set $B$ by letting $k \in B$ if and only if $k \in A$ or, if  there exists an $n$ such that $b(2n) \leq k < b(2n+1)$ and
			\[
				n \not \in W_{e}. 
			\]
			Put $C_{f(e)} = \Cant \join[B] \{0\}$. Since $B$ is co-r.e.\ it is straightforward to verify that $C_{f(e)}$ is $\P^0_1$.	
	
			We claim that $C_{f(e)}$ has finite $\Hm^s$-measure if and only if $W_e$ is cofinite.
			To see this, note that if $W_e$ is cofinite, $C_{f(e)}$ is bi-Lipschitz equivalent to $\Cant \join[s] \{0\}$, which has finite $\Hm^s$-measure (see, for example, \cite{mattila}).
			If, on the other hand, $W_e$ has an infinite complement, then there exist infinitely many blocks of size $b$ in $B$, as opposed to just the blocks of size $a$ a priori present in $B$.
			It follows that the Cantor-like set defined by $C_{f(e)}$ has finite $\Hm^h$-measure, where $\Hm^h$ is a generalized Hausdorff measure given by a dimension function
			\[
				h(2^{-n}) = 2^{-(sn + \alpha(n))},
			\]   
			where $\alpha(n) \to \infty$ for $n \to \infty$. It follows that $h(2^{-n})/2^{-sn} \to 0$ for $n \to \infty$. Therefore, $C_{f(n)}$ has infinite (in fact, non-$\sigma$ finite) $\Hm^s$-measure (see \cite{falconer:1990}, \cite{mattila}).
		\end{proof}

		\begin{pro}\label{s02left}
			The set of indices of $\S^{0}_{2}$ classes that are nonempty is $\S^{0}_{2}$-complete.      
		\end{pro}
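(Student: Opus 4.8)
The plan is to pin down the $\S^0_2$ upper bound by unwinding the definition of a $\S^0_2$ class as a uniformly recursive union of $\P^0_1$ classes, and then to establish hardness by a direct many-one reduction from $\operatorname{Fin}$.

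First I would fix an index convention: a $\S^0_2$ class $E$ can be written as $E = \bigcup_n [T_n]$ for a sequence of trees $T_n$ that is recursive uniformly in $n$ (and in the index of $E$). With this representation, $E \neq \emptyset$ iff $\exists n\, [T_n] \neq \emptyset$. By the uniform version of Proposition \ref{p01left}, the predicate ``$[T_n]\neq\emptyset$'' --- equivalently ``$\forall m\, \exists \sigma\, (|\sigma|=m \wedge \sigma \in T_n)$'', using König's Lemma for the finitely-branching tree $T_n$ --- is $\P^0_1$ uniformly in $n$. Prefixing an existential number quantifier yields a $\S^0_2$ predicate, so the index set is $\S^0_2$.

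For hardness, I would reduce the $\S^0_2$-complete set $\operatorname{Fin} = \{e : W_e \text{ finite}\}$. Given $e$, define, uniformly in $n$ and $e$, a recursive tree $T_{n,e}$ that equals the full binary tree as long as no number $\geq n$ has entered $W_e$ and which stops growing (becomes finite) the moment such a number appears: explicitly, $\sigma \in T_{n,e}$ iff $W_{e,|\sigma|} \cap [n, \infty) = \emptyset$. Since the $W_{e,s}$ are increasing, this set is prefix-closed and recursive uniformly in $n,e$; moreover $[T_{n,e}] = \Cant$ if no element $\geq n$ ever enters $W_e$, and $[T_{n,e}] = \emptyset$ otherwise. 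Setting $E_{f(e)} = \bigcup_n [T_{n,e}]$ gives a $\S^0_2$ class, uniformly in $e$, with $E_{f(e)} \neq \emptyset$ iff $\exists n\,(W_e \cap [n,\infty) = \emptyset)$ iff $W_e$ is finite. Hence $f$ is a many-one reduction from $\operatorname{Fin}$ to the index set of nonempty $\S^0_2$ classes, which is therefore $\S^0_2$-hard and, combined with the upper bound, $\S^0_2$-complete.

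The only thing to verify carefully is the equivalence driving the reduction: that nonemptiness of the union $\bigcup_n [T_{n,e}]$ coincides exactly with the boundedness (hence finiteness) of $W_e$, together with the routine check that the $T_{n,e}$ are genuinely prefix-closed and uniformly recursive. I expect no real obstacle here --- the argument is a clean transposition of the ``$\exists n\,\P^0_1$'' shape of $\operatorname{Fin}$ into the ``$\exists n$ over $\P^0_1$ classes'' shape of a $\S^0_2$ union.
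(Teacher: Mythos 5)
Your proof is correct. The upper bound is handled exactly as in the paper: both arguments present a $\S^0_2$ class as an effective union, indexed by $n$, of $\P^0_1$ classes, observe that nonemptiness of each closed piece is a $\P^0_1$ predicate uniformly in $n$ (in your case via the explicit König's Lemma formulation $\forall m\,\exists \sigma\,(|\sigma|=m \wedge \sigma \in T_n)$, with the inner quantifier bounded), and prefix $\exists n$. For hardness, however, you take a genuinely different route. The paper argues abstractly: since nonemptiness for $\P^0_1$ classes is $\P^0_1$-complete by Proposition \ref{p01left}, the $\exists n$-quantified form is $\S^0_2$-complete. That one-line inference actually hides a uniformity requirement --- one needs the reductions witnessing $\P^0_1$-hardness to be available uniformly in the outer parameter $n$ in order to transform an arbitrary $\S^0_2$ predicate $\exists n\, P(e,n)$ into an index of a $\S^0_2$ class --- and the paper leaves this implicit. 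Your explicit many-one reduction from $\operatorname{Fin}$, via the trees $T_{n,e} = \{\sigma : W_{e,|\sigma|} \cap [n,\infty) = \emptyset\}$, supplies precisely this uniform content in a self-contained way: the trees are visibly prefix-closed (by monotonicity of the enumeration $W_{e,s}$) and uniformly recursive, $[T_{n,e}]$ is $\Cant$ or $\emptyset$ according as $W_e \cap [n,\infty)$ is empty or not, and so $\bigcup_n [T_{n,e}] \neq \emptyset$ iff $W_e$ is finite. This is also stylistically closer to how the paper handles its other completeness results (the $\operatorname{Fin}$-reduction in Theorem \ref{p01} and the $\operatorname{Cof}$-reduction for finite measure), so your version trades the paper's brevity for a fully explicit and arguably more rigorous hardness argument; both are sound.
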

		\begin{proof}
			Let $E$ be a $\S^0_2$ class, and let $R_e$ be a recursive relation so that
			\[
				x \in E \quad \iff \quad \exists n \, \forall m \; R_e(x\Rest{m}, n)
			\]
			$E$ is non-empty if and only if one of the $\P^0_1$ classes $\{z \colon \forall m \; R_e(z\Rest{m}, n) \}$ is non-empty.
			Deciding whether a $\P^0_1$-class is non-empty is $\P^0_1$-complete, as we saw in Proposition \ref{p01left}. Hence deciding whether, for given $e$, $\exists n \, \forall m \; R_e(x\Rest{m}, n)$, is $\S^0_2$-complete.
		\end{proof}

		\begin{pro}\label{s02}
			For any rational $0<s<1$, the set of indices of $\S^{0}_{2}$ classes of non-zero $\Hm^s$-measure is $\S^{0}_{2}$-complete.      
		\end{pro}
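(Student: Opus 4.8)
The plan is to establish matching upper and lower bounds, exactly in the spirit of the preceding propositions, and in fact to reduce the whole statement to Theorem~\ref{p01} together with countable subadditivity of $\Hm^s$.

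For the upper bound, I would put a $\S^0_2$ class $E$ into its normal form
\[
	x \in E \quad \iff \quad \exists n \, \forall m \; R_e(x\Rest{m}, n),
\]
so that $E = \bigcup_n F_n$ with each $F_n = \{z \colon \forall m \; R_e(z\Rest{m}, n)\}$ a $\P^0_1$ class, uniformly in $n$. The key observation is that, since $\Hm^s$ is a (metric, hence countably subadditive) outer measure, $\Hm^s(E) = 0$ holds precisely when $\Hm^s(F_n) = 0$ for every $n$: monotonicity gives one direction, and $\Hm^s(\bigcup_n F_n) \leq \sum_n \Hm^s(F_n)$ gives the other. Hence
\[
	\Hm^s(E) > 0 \quad \iff \quad \exists n \; \Hm^s(F_n) > 0.
\]
By Theorem~\ref{p01} the predicate ``$\Hm^s(F_n) > 0$'' is $\S^0_2$, uniformly in an index for the $\P^0_1$ class $F_n$, and such an index is recursive in $(e,n)$; prefixing an existential number quantifier to a $\S^0_2$ matrix keeps us at $\S^0_2$, so the index set is $\S^0_2$.

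For the lower bound, I would simply note that every $\P^0_1$ class is a $\S^0_2$ class, and that a tree index $e$ can be converted recursively into a $\S^0_2$ index naming the same class $[T_e]$. Composing this conversion with the $\S^0_2$-hardness already established for $\P^0_1$ classes in Theorem~\ref{p01} yields a many-one reduction witnessing that the present index set is $\S^0_2$-hard.

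The one place calling for care -- and the closest thing to an obstacle -- is the uniformity in the upper bound: one must check that the normal form above produces the trees defining the $F_n$ uniformly in $(e,n)$, so that Theorem~\ref{p01} can be applied to the index $h(e,n)$ of $F_n$ rather than to a single fixed tree. This is routine, so no real difficulty arises, and the proposition reduces to combining countable subadditivity of $\Hm^s$ with Theorem~\ref{p01}.
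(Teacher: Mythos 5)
Your proposal is correct and follows essentially the same route as the paper: decompose the $\S^0_2$ class as a uniform union of $\P^0_1$ classes $F_n$, observe that $\Hm^s E > 0$ if and only if $\Hm^s F_n > 0$ for some $n$, apply the predicate of Theorem~\ref{p01} uniformly under one more existential number quantifier for the upper bound, and transfer hardness via the trivial inclusion of $\P^0_1$ classes among $\S^0_2$ classes. The only (cosmetic) difference is that you justify the key equivalence by countable subadditivity of the outer measure $\Hm^s$, whereas the paper invokes continuity from below ($\Hm^s E = \lim_n \Hm^s E_n$ for the increasing finite unions, using measurability of $E$); your justification is, if anything, slightly more elementary.
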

		\begin{proof}
			Assume $E = \bigcup_n F_n$, where each $F_n$ is closed. Let $E_n = \bigcup_{m\leq n} F_n$.  Since $E$ is measurable and $\Hm^s$ is a Borel measure, we have $\Hm^s E = \lim_n \Hm^s E_n$.
			Hence $E$ has non-zero $\Hm^s$-measure if and only if one of the $F_n$ has. 

			If $E$ is $\S^0_2$, then the indices of the $\P^0_1$ classes $E_n$ can be obtained effectively and uniformly. The result now follows from Theorem \ref{p01}.
		\end{proof}

		Passing from $\S^0_2$ to $\P^0_2$ classes, we see a significant jump in complexity.

		\begin{thm}\label{p02}
			For any rational $0<s<1$, the set of indices of $\P^{0}_{2}$ classes that have non-zero $\Hm^s$-measure is $\S^1_1$-complete. 
		\end{thm}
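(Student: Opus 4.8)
The plan is to show the index set is $\S^1_1$ via two separate arguments. First, the upper bound: I need to verify that ``$E$ has non-zero $\Hm^s$-measure'' is $\S^1_1$ for a $\P^0_2$ class $E$. By the Besicovitch--Davies Theorem (Theorem~\ref{BD} and Corollary~\ref{BD-cor}), $E$ has non-zero $\Hm^s$-measure if and only if $E$ has a closed subset of non-zero $\Hm^s$-measure. Since every closed subset is $\P^0_1(f)$ for some $f \in \Baire$, and ``$\Cyl{T}$ is a subset of $E$ of non-zero $\Hm^s$-measure'' is arithmetical in $f$ (using Theorem~\ref{p01}, the non-zero measure condition for a $\P^0_1$ class is $\S^0_2$, and containment $\Cyl{T} \subseteq E$ for a $\P^0_2$ set $E$ is $\P^0_2$), the whole statement takes the form $\exists f \in \Baire\,[\text{arithmetical}]$, which is $\S^1_1$. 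So the upper bound is routine given the earlier results.

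\emph{The hard part will be} $\S^1_1$-hardness, which requires encoding an arbitrary $\S^1_1$ set as the measure-positivity of a uniformly constructed $\P^0_2$ class. The natural approach is to reduce from the $\S^1_1$-complete set of indices $e$ for which the tree $T_e \subseteq \Nstr$ is \emph{ill-founded} (has an infinite path). Given $e$, I would build, uniformly and recursively, a $\P^0_2$ class $C_{f(e)} \subseteq \Cant$ so that $C_{f(e)}$ has non-zero $\Hm^s$-measure precisely when $T_e$ is ill-founded. The idea is to associate to each node of $T_e$ a ``coded product'' copy of a positive-measure Cantor-like set (via the $A$-join of Proposition~\ref{pro:hmeas-join}), arranged so that mass survives along a branch exactly when that branch is infinite. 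Concretely, I would assign to each finite string $\sigma \in T_e$ a cylinder-like piece carrying $\Hm^s$-weight, and have the $\P^0_2$ (i.e., $G_\delta$) definition retain a point only if the sequence of approximations it follows corresponds to an infinite path through $T_e$; a finite (dying) branch contributes a set that, in the limit, is covered down to $\Hm^s$-measure zero.

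To make the weights balance, I would use the generalized join to attach, at each level of the construction, a factor of dimension slightly below $s$, so that an \emph{infinite} nested sequence of such factors along a genuine path accumulates to a set of positive $\Hm^s$-measure (as in the finite-measure construction of Theorem~\ref{p01}), whereas any branch that terminates gets ``thinned'' past the critical rate and collapses to measure zero. The $\P^0_2$ form arises naturally: being in $C_{f(e)}$ asserts that for every level $n$ (the outer $\forall$) the point lies in an appropriate open neighborhood determined by an extendible node of $T_e$ at that level, which is a $\S^0_1$ condition, yielding a $G_\delta$ class overall. The correctness argument then splits into: (i) if $T_e$ is well-founded, every point's branch dies at some finite stage, so $C_{f(e)}$ is a countable union (over dying branches) of sets each covered to arbitrarily small $\Hm^s$-content, forcing $\Hm^s C_{f(e)} = 0$; and (ii) if $T_e$ has an infinite path, the surviving nested product along that path has positive $\Hm^s$-measure by Proposition~\ref{pro:hmeas-join}.

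\emph{I expect the main obstacle} to be the quantitative bookkeeping in part (i): showing that the union over all ill-fated finite branches genuinely has $\Hm^s$-measure zero rather than merely each branch individually being small. Because $\Hm^s$ is \emph{not} $\sigma$-finite, one cannot simply sum measures of countably many null pieces without care; instead I would argue directly at the level of $\Hm^s_{2^{-n}}$ (pre-measure) coverings, exhibiting for each target bound $2^{-m}$ an explicit finite-to-countable cover whose $s$-weighted sum is below $2^{-m}$, exploiting that well-foundedness of $T_e$ gives, on each branch, a stage past which the thinning rate has driven the local dimension strictly below $s$. Calibrating the branching/thinning schedule so that this works uniformly across all branches while keeping the whole construction $\P^0_2$ and recursive in $e$ is the delicate point; the generalized join and the Decreasing Sets Lemma are the tools that should make the estimates go through.
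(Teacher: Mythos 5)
Your upper bound is correct and is essentially the paper's argument: by the effective Besicovitch--Davies theorem (Corollary \ref{BD-cor}), $\Hm^s E > 0$ iff $\exists f\, \Hm^s C_f > 0$, and ``$\Hm^s C_f > 0$'' is $\S^0_2(f)$ by the relativization of Theorem \ref{p01}, so the index set is $\S^1_1$. (The paper quantifies over the canonical classes $C_f$ from Corollary \ref{BD-cor}, which spares it your containment clause, but your variant also works, since $[T_f] \subseteq \bigcap_n U_n$ is $\P^0_2(f)$ by compactness.)

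The hardness half is where there is a genuine gap: what you give is a construction plan whose decisive estimates you explicitly leave open, and the plan attacks a harder problem than the one that needs solving. You propose to hang calibrated positive-weight pieces on the nodes of $T_e$ so that mass survives exactly along infinite branches, and you flag the covering bookkeeping in the well-founded case as the delicate point. Two remarks. First, the obstacle you name is not the real one: a countable union of $\Hm^s$-null sets is null by countable subadditivity of the outer measure, so non-$\sigma$-finiteness is irrelevant there. The actual obstruction to your scheme is that $T_e$ is a tree on $\omega$, so each node has infinitely many children; any assignment of pieces to children inside the parent's piece must spread mass over infinitely many shrinking cylinders, and keeping the surviving intersection of positive $\Hm^s$-measure while keeping the class $\P^0_2$ (not merely $\S^1_1$) in Cantor space forces you to code the branch into the point itself. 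Second, once the branch is coded into the point, no weight calibration is needed at all, and this is exactly the paper's one-step reduction: given a $\P^0_2$ class $E$ (for instance the set of points of $\Cant$ coding infinite paths through $T_e$, which is $\P^0_2$ uniformly in $e$), set $F = \Cant \join[s] E$. By Proposition \ref{pro:hmeas-join}, if $E \neq \emptyset$ then $F \supseteq \Cant \join[s] \{y\}$ has non-zero $\Hm^s$-measure (take the second factor a singleton, which has positive $\Hm^0$-measure), while if $E = \emptyset$ then $F = \emptyset$; so positivity of $\Hm^s F$ is literally nonemptiness of $E$, and $\S^1_1$-hardness follows from the $\S^1_1$-hardness of nonemptiness of $\P^0_2$ classes, i.e.\ of ill-foundedness of $T_e$. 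Your instinct to use the join was right, but it should be applied once, globally, with the full Cantor space as the mass factor --- not per node with calibrated dimensions. As it stands, the hardness direction of your proposal is not a proof.
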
         

		\begin{proof}
			Suppose $E$ is a $\P^{0}_{2}$ class. Consider the $s$-join
			\[
			 	F = \Cant \join[s] E  
			\]
			By Proposition \ref{pro:hmeas-join}, $F$ has non-zero $\Hm^s$-measure if and only if $E$ is not empty.
			Since the set of indices of $\P^{0}_{2}$ classes in $\Cant$ that are nonempty is $\S^{1}_{1}$-hard, so is the set of indices of $\P^{0}_{2}$ classes that have non-zero $\Hm^s$-measure. 
			By Corollary \ref{BD-cor}, the set of indices of $\S^1_1$ classes that are of non-zero $\Hm^s$-measure is $\S^1_1$, since 
			\[
			\Hm^s \{x \colon \exists g \forall n \, R(x\Rest{n},g\Rest{n})\} > 0 \quad 
			 \Iff \quad \exists f \;  \Hm^s C_f > 0,
			\]
			where $C_f$ is the $\P^0_1(f)$ class from Corollary \ref{BD-cor}.
		\end{proof}

		A straightforward computation shows that the set of indices of $\P^0_2$ classes that have non-zero Lebesgue measure is $\S^0_3$.
		Hence at the level $\P^0_2$ it is far more complicated to determine whether a class has non-zero Hausdorff measure than whether it has non-zero Lebesgue measure. 

		Our results are summarized in Figure \ref{summary-table}.

		\begin{figure}
			\begin{center}
				\begin{tabular}{|c|c|c|c|}
					\hline
					Family      & Nonempty? & Positive Hausdorff measure?  & Positive Lebesgue measure? \\
					\hline
					&&&\\
					$\S^0_1$  &  $\S^0_1$-complete  &  $\S^0_1$-complete  & $\S^0_1$-complete  \\
					&&&\\
					\hline
					&&&\\
					$\P^0_1$  &  $\P^0_1$-complete  &   & \\
					&&$\S^0_2$-complete & $\S^0_2$-complete\\
					$\S^0_2$  &  $\S^0_2$-complete &    &    \\
					&&&\\
					\hline
					&&&\\
					  $\P^0_2$  &   &  & $\S^0_3$ \\
					  & $\S^1_1$-complete &  $\S^1_1$-complete &\\
					$\S^1_1$  &    &    & \\
					&&&\\
					\hline
				\end{tabular}
			\end{center}
			\caption{Index set complexity of some classes of reals. For example, the set of indices of $\P^{0}_{2}$ classes that are of non-zero Hausdorff measure is $\S^{1}_{1}$-complete, and this is shown in Theorem \ref{p02}.}
			\label{summary-table}
		\end{figure}

	\section{Closed subsets of non-zero Hausdorff measure}\label{3}
		We now turn to the question how difficult it is to find a closed subset of non-zero Hausdorff measure.
		We will measure this in terms of the recursion theoretic complexity of the parameter needed to define such a closed subset.

		Corollary \ref{BD-cor} tells us that given a $\S^1_1$ class $E$ of non-zero $\Hm^s$-measure, we can find a function $r: \Nat \to \Nat$ such that there exists a $\P^0_1(r)$ subclass of non-zero $\Hm^s$-measure. How complex is $r$?
		We will see that a few fundamental results in higher recursion theory facilitate the classification of the possible complexities.

		\begin{df}
			A set $B \subseteq \Baire$ is called a \emph{basis} for a pointclass $\Gamma$ if each nonempty collection of reals that belongs to $\Gamma$ has a member in $B$.
		\end{df}

		We shall be particularly interested in the case $\Gamma=\S^{1}_{1}$. Here several bases are known.

		\begin{thm}[Basis theorems for $\S^{1}_{1}$]\label{basis}
			Each of the following classes is a basis for $\S^{1}_{1}$:
			\begin{enumerate}
				\item[$(1)$] $\{x \colon x \le_{\T} \mc O\}$, the reals recursive in some $\P^{1}_{1}$ set $($Kleene, see Rogers \cite{Rogers}*{XLII(b)}$)$;
				\item[$(2)$] $\{x \colon x<_{h}\mc O\}$, the reals of hyperdegree strictly below $\mc O$ $($Gandy \cite{Gandy}; see also Rogers \cite{Rogers}*{XLIII(a)}$)$;
				\item[$(3)$] $\{x \colon x\not\le_{h} z \And z\not\le_{h} x\}$, where $z$ is any given non-hyperarithmetical real $($Gandy, Kreisel, and Tait \cite{GKT}$)$.
			\end{enumerate}
		\end{thm}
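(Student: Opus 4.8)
These three statements are the classical basis theorems for $\S^1_1$, so my plan is not to prove anything new but to recall the standard arguments and indicate where the real work lies. All three rest on the same normal form: a nonempty $\S^1_1$ set $A \subseteq \Baire$ is the projection of a recursive tree $T \subseteq \Nstr \times \Nstr$ of matched pairs $(\sigma,\tau)$ along which the witnessing recursive relation holds, and $A \neq \nil$ precisely when $T$ is ill-founded. Each basis theorem amounts to selecting an infinite branch of $T$ whose first coordinate $x$ lands in $A$ while obeying an additional recursion-theoretic constraint.

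For $(1)$, Kleene's basis theorem, I would build the \emph{leftmost} infinite branch of $T$. Starting from a node already known to root an ill-founded subtree, I ask for the least pair $(i,j)$ such that the subtree of $T$ below $(\sigma \Conc i, \tau \Conc j)$ is again ill-founded. Ill-foundedness of a recursive subtree (uniformly in the node) is a $\S^1_1$ predicate, hence decidable from $\mc{O}$; so the leftmost branch $(x,f)$ is produced by an $\mc{O}$-recursive search, giving $x \le_{\T} \mc{O}$ with $x \in A$.

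For $(2)$, Gandy's basis theorem, the extra content over $(1)$ is to land in a member of hyperdegree \emph{strictly} below $\mc{O}$. For a nonempty $\S^1_1$ set this is equivalent to finding $x \in A$ with $\omega_1^{x} = \omega_1^{\mathrm{CK}}$, since such an $x$ satisfies $\mc{O} \not\le_{h} x$ (one cannot see the whole $\P^1_1$-complete set $\mc{O}$ hyperarithmetically in $x$ without pushing $\omega_1^{x}$ past $\omega_1^{\mathrm{CK}}$), whence $x <_{h} \mc{O}$. I would obtain such an $x$ by Gandy--Harrington forcing, using the nonempty $\S^1_1$ subsets of $A$ as conditions and checking that a sufficiently generic real both lands in $A$ and preserves $\omega_1^{\mathrm{CK}}$; equivalently, by refining the leftmost-branch construction of $(1)$ so that each chosen node has its ill-foundedness witnessed without forcing $\omega_1^{x}$ above $\omega_1^{\mathrm{CK}}$. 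The main obstacle, and the entire point of Gandy's theorem, is exactly this ordinal-preservation step: showing that the selected branch does not compute a well-order of length $\geq \omega_1^{\mathrm{CK}}$, which is where the bounding/Gandy--Harrington machinery is indispensable.

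For $(3)$, the Gandy--Kreisel--Tait theorem, I would relativize the forcing of $(2)$ to a fixed non-hyperarithmetic $z$ and arrange two diagonalizations at once. The requirement $x \not\le_{h} z$ is satisfied because $\mathrm{HYP}(z) = \{x \colon x \le_{h} z\}$ is a countable $\P^1_1(z)$ set, so a sufficiently generic member of $A$ can be steered off each of its countably many elements; the requirement $z \not\le_{h} x$ is satisfied because a generic $x$ kept to $\omega_1^{x} = \omega_1^{\mathrm{CK}}$ and generic over the hyperarithmetic sets cannot code the non-hyperarithmetic $z$. The main obstacle here is performing both diagonalizations simultaneously inside a single $\S^1_1$ condition while keeping the construction definable from the appropriate oracle; this is precisely the balancing act carried out in \cite{GKT}. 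In all three cases the results are classical, and for the purposes of this paper we simply invoke them through the cited sources.
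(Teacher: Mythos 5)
The paper gives no proof of this theorem at all: it is quoted as a trio of classical results and discharged entirely by the attributions to Kleene, Gandy, and Gandy--Kreisel--Tait \cite{GKT}, which is exactly what you do when you ``simply invoke them through the cited sources,'' so your treatment matches the paper's. Your sketches are faithful to the standard arguments in those references, with one small correction worth recording: in $(2)$, $\omega_1^x = \omega_1^{CK}$ by itself yields only $\mc O \not\le_h x$, and the other half of $x <_h \mc O$, namely $x \le_h \mc O$, comes not from that equivalence but from the fact that the selection (leftmost-branch refinement or the Gandy--Harrington construction) can be carried out recursively in $\mc O$, giving $x \le_{\T} \mc O$.
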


		We first show that any basis for $\S^1_1$ contains a function that specifies a subset of non-zero Hausdorff measure.

		\begin{thm}\label{oh}
			Let $0< s < 1$ be rational. For each set $B \subseteq \Baire$ that is a basis for $\S^{1}_{1}$ and each $\S^{1}_{1}$ class $E$ of non-zero $\Hm^s$-measure,
			there is some $f \in B$ such that $E$ has a $\P^{0}_{1}(f)$ subclass of non-zero $\Hm^s$-measure.
		\end{thm}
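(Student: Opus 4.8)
The plan is to sidestep the function $r$ produced by Corollary \ref{BD-cor} entirely — since we have no a priori control over its recursion-theoretic complexity — and instead apply the basis property directly to the collection of \emph{all} functions that witness the desired conclusion. First I would fix a recursive relation $R$ putting $E$ in canonical form
\[
	E = \{x \colon \exists g \, \forall n \, R(x\Rest{n}, g\Rest{n})\},
\]
and, following Corollary \ref{BD-cor}, set $C_f = \{x \colon \exists g \leq f \, \forall n \, R(x\Rest{n}, g\Rest{n})\}$ for each $f \in \Baire$. Define the ``witness set''
\[
	G = \{f \in \Baire \colon \Hm^s(C_f) > 0\}.
\]
Every $x \in C_f$ satisfies $\exists g\,\forall n\,R(x\Rest n,g\Rest n)$, so $C_f \subseteq E$; and the tree $T_f = \{\sigma \colon \exists \tau \leq f\Rest{|\sigma|}\, \forall n \leq |\sigma| \, R(\sigma\Rest{n}, \tau\Rest{n})\}$ is recursive in $f$ (the search over $\tau$ ranges over the finite set bounded by $f\Rest{|\sigma|}$), so $C_f = [T_f]$ is a $\P^0_1(f)$ class. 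Thus for every $f \in G$ the class $C_f$ is exactly the kind of subclass the theorem asks for. By Corollary \ref{BD-cor}, every $f$ majorizing $r$ lies in $G$, so $G \neq \nil$.

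The key step is then to verify that $G$ is $\S^1_1$; in fact I expect it to be arithmetical, and this is where the relativized form of Theorem \ref{p01} comes in. Reasoning exactly as in that proof, and invoking the Decreasing Sets Lemma,
\[
	\Hm^s(C_f) > 0 \quad \Iff \quad \exists m \, \forall n, k \; \Hm^s_{2^{-n}} \Cyl{T_f^k} \geq 2^{-m},
\]
where $T_f^k$ is the set of length-$k$ strings in $T_f$. For each fixed $(m,n,k)$ the quantity $\Hm^s_{2^{-n}}\Cyl{T_f^k}$ is computed from finitely many covers, and deciding $\Hm^s_{2^{-n}}\Cyl{T_f^k} \geq 2^{-m}$ consults only the finite initial segment $f\Rest{k}$. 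Hence membership in $G$ has the shape $\exists m\,\forall n,k\,P(f,m,n,k)$ with $P$ recursive, so $G$ is (lightface) $\S^0_2$, and a fortiori $\S^1_1$.

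Finally I would combine the two facts: since $B$ is a basis for $\S^1_1$ and $G$ is a nonempty $\S^1_1$ subset of $\Baire$, there is some $f \in B \cap G$, and for this $f$ the class $C_f$ is a $\P^0_1(f)$ subclass of $E$ with $\Hm^s(C_f) > 0$, as required. The main obstacle will be the complexity bookkeeping for $G$ rather than any deep new idea: one must check that the $\S^0_2$ estimate of Theorem \ref{p01}, originally phrased for the index of a recursive tree, relativizes \emph{uniformly} when the tree $T_f$ is presented through $f$ as an oracle — equivalently, that the finite-cover decision procedure underlying $\Hm^s_{2^{-n}}\Cyl{T_f^k}$ reads only a bounded portion of $f$. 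Granting this uniformity, the conclusion is immediate.
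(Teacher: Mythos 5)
Your proposal is correct and follows essentially the same route as the paper's own proof: both consider the witness set $\{f \in \Baire \colon \Hm^s(C_f) > 0\}$, observe via the (relativized, uniform) analysis of Theorem \ref{p01} that it is $\S^0_2$ and hence $\S^1_1$, and then apply the basis property of $B$ to obtain $f \in B$ for which $C_f$ is a $\P^0_1(f)$ subclass of $E$ of non-zero $\Hm^s$-measure. Your explicit checks --- nonemptiness of the witness set via Corollary \ref{BD-cor}, and that the finite-cover decision procedure for $\Hm^s_{2^{-n}}\Cyl{T_f^k}$ reads only a bounded initial segment of $f$ --- simply fill in details the paper leaves implicit.
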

		\begin{proof}
			Let $R$ be a recursive predicate such that 
			\[
				E =  \{x \colon \exists g \forall n \, R(x\Rest{n},g\Rest{n})\}
			\]
			For any function $f: \Nat \to \Nat$, 
			\[
				C_{f} =\{x \colon  \exists g \leq f \, \forall n \, R(x\Rest{n},g\Rest{n})\}
			\]  
			is a $\P^{0}_{1}(f)$ subclass of $E$.
			Now consider the set 
			\[
			\{ f\in\Baire \colon \Hm^s C_{f} > 0 \}
			\]
			By Theorem \ref{p01}, this is a $\S^{0}_{2}$ class in $\Baire$, in particular it is $\S^{1}_{1}$, hence it has a member $f \in B$. $C_{f}$ for such an $f$ is a $\P^{0}_{1}(G_{f})$ class, where $G_f$ is the graph of $f$.
		\end{proof}

		In particular, $E$ always has a $\P^{0}_{1}(\mc O)$ subclass of non-zero Hausdorff measure. 
		We will see next that there are examples, even of $\P^0_2$ classes, where no hyperarithmetical real is powerful enough to define a $\P^0_1$ subclass of non-zero Hausdorff measure. 

		\begin{thm}\label{top}
			Let $0< s < 1$ be rational. There is a $\P^{0}_{2}$ class $G$ of non-zero $\Hm^s$-measure such that the following holds:
			If $x \in \Cant$ is such that some $\P^0_1(x)$ subclass of $G$ has non-zero $\Hm^s$-measure, then $x \geq_{\T} H$ for every $H \in \operatorname{HYP}$.
		\end{thm}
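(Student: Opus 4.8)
The plan is to construct $G$ directly as a Cantor-like subset of $\Cant$ of \emph{finite}, positive $\Hm^s$-measure whose branching is prescribed by a recursive Souslin scheme $\{A_\tau\}_{\tau\in\Nstr}$ of the type appearing in Theorem \ref{Davies}, chosen so that the scheme encodes the hyperarithmetic hierarchy. The organizing principle is the heuristic stressed in the introduction: at the critical exponent $s$, possessing positive $\Hm^s$-measure behaves like non-emptiness rather than like positive Lebesgue measure, precisely because a set of \emph{finite} $\Hm^s$-measure carries no slack. Whereas a positive-Lebesgue-measure subclass of $\Cant$ may freely discard whole subtrees (which is why the Lebesgue problem stays arithmetical), a positive-$\Hm^s$-measure closed subclass $C$ of a finite-measure $G$ is forced to retain the prescribed branching pattern at a positive density of levels. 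This rigidity is the engine of the theorem, and it is what lets an oracle $x$ defining such a $C$ read global, hyperarithmetic information off its own $\P^0_1(x)$-tree. Non-emptiness and positivity of $\Hm^s(G)$ will be guaranteed by the self-similar, coded-product estimates of Proposition \ref{pro:hmeas-join} applied along the surviving branches.

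For the encoding, I would reserve, at a positive density of levels, disjoint families of \emph{coding blocks}, indexed by pairs $(a,n)$ with $a$ a notation in Kleene's $\mc O$ and $n\in\Nat$, arranged so that every block-type $(a,n)$ recurs cofinally along the tree. The set of extensions that the scheme permits inside a block $(a,n)$ is made to depend on the value $H_a(n)$ of the $|a|$-th iterate of the jump, using two distinct ``full'' branching patterns $P_0,P_1$ — one for each possible bit — both heavy enough (local dimension exactly $s$) to be protected by the rigidity above, yet distinguishable by inspecting which finitely many immediate successors occur. Whether the branches along a notation $a$ are ultimately extendible is governed by the well-foundedness of $a$; for $a\in\mc O$ the corresponding branches survive and contribute measure, while ill-founded $a$ yield dead ends. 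This is exactly the $\P^0_2$ (indeed $\S^1_1$-flavored) phenomenon underlying Theorem \ref{p02}: the genuinely closed, positive-measure content of $G$ sees the well-founded part of $\mc O$, which is not hyperarithmetic. I would present $G$ as a $\P^0_2$ class $\{x : \forall k\,\exists m\, R(x\Rest{m},k)\}$ with $R$ recursive.

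Now suppose $x\in\Cant$ is such that some $\P^0_1(x)$ subclass $C\subseteq G$ has $\Hm^s C>0$, and fix any $H\in\operatorname{HYP}$. Choose, once and for all (non-uniformly in $H$), a true notation $a=a_H\in\mc O$ with $H\le_{\T} H_a$; the notation $a_H$ enters the argument only as a constant, so we never need $x\ge_{\T}\mc O$. The decoding step runs as follows: $x$ computes a tree $T_C$ with $C=[T_C]$, and, using the recursive block assignment, searches $T_C$ for a coding node belonging to block $(a_H,n)$ at which the full pattern $P_0$ or $P_1$ is realized. By the rigidity lemma such a node exists and is located by a finite $x$-search, and the pattern present there is $x$-decidable; reading it off yields $H_{a_H}(n)$. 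Ranging over $n$ gives $x\ge_{\T} H_{a_H}\ge_{\T}H$, and since $H\in\operatorname{HYP}$ was arbitrary, $x\ge_{\T}H$ for every $H\in\operatorname{HYP}$.

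The main obstacle is the rigidity lemma: that a $\P^0_1(x)$ subclass of the \emph{finite-measure} set $G$ with $\Hm^s C>0$ must realize the full coding pattern $P_0$ or $P_1$ at an $x$-computably locatable node of each recurring block $(a_H,n)$. Establishing it is where the critical-dimension geometry is used, via the Increasing and Decreasing Sets Lemmas together with a mass-distribution (Frostman-type) argument: if $C$ avoided the full pattern throughout a block, its local $\Hm^s$-content at the relevant scale would drop below that of $G$ by a fixed factor, and iterating across the positively-many coding levels would force $\Hm^s C=0$. Calibrating the patterns $P_0,P_1$ so that this dichotomy is sharp — heavy enough to be forced, yet finitely distinguishable — while simultaneously keeping $\Hm^s G$ finite and positive, is the delicate part of the construction; the remaining verifications (that $G$ is $\P^0_2$, that the surviving branches carry positive measure via Proposition \ref{pro:hmeas-join}, and that the decoding is $x$-recursive) are then routine.
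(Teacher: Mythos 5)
There is a genuine gap, in fact two, and both sit exactly where you yourself flag the difficulty. First, the decoding step fails. Your rigidity lemma, in the weak form your covering computation supports, shows only that the full pattern must be realized at some node of the \emph{extendible} part of the tree $T_C$. But a $\P^0_1(x)$ class has many $x$-recursive tree representations, and $T_C$ may contain dead ends exhibiting arbitrary spurious patterns: nothing constrains the non-extendible part of $T_C$, so both $P_0$ and $P_1$ can appear within the same block type, the wrong one in subtrees that die out at unbounded depth. Extendibility of a node is $\P^0_1(x)$, not $x$-decidable, and the hypothesis $\Hm^s C > 0$ gives no $x$-effective lower bound on the local content of any particular node (positivity of the measure of a $\P^0_1$ class is $\S^0_2$, cf.\ Theorem~\ref{p01}, so it cannot be verified at a node by a finite search). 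A ``pattern that persists at all depths'' test is a $\P^0_1(x)$ question per candidate, which at best yields $H \le_{\T} x'$ --- strictly weaker than the theorem. Second, the construction of $G$ itself is not available as described. For $G$ to be $\P^0_2$ the Souslin/branching scheme must be recursive, but your rule ``permit pattern $P_{H_a(n)}$ in block $(a,n)$'' consults $H_a(n)$, which is not computable (nor uniformly hyperarithmetic) in $(a,n)$. The repair you gesture at --- branches guess the hierarchy and ill-founded $a$ yield dead ends --- cannot work at the $\P^0_2$ level: well-foundedness of notations is $\P^1_1$-complete, and by the Harrison pseudohierarchy phenomenon there are ill-founded $a$ supporting genuine infinite paths satisfying any arithmetical (in particular $\P^0_2$) hierarchy conditions, so you cannot arrange that exactly the well-founded side survives and is read off from tree patterns.

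The paper turns precisely this obstruction into the proof. Since pseudohierarchies still compute every hyperarithmetic real, the information is carried by the \emph{paths}, not by decodable tree patterns: one takes the Enderton--Putnam set $E = \{ z \in \Cant \colon \forall H \in \operatorname{HYP}\; H \le_{\T} z\}$, which is $\S^1_1$ and has Hausdorff dimension $1$ because it contains the upper cone of $\mc O$. Given $x$ with a nonempty $\P^0_1(x)$ subclass of $E$, the relativized low and hyperimmune-free basis theorems produce members $y$ (low over $x$) and $z$ (hyperimmune-free over $x$) whose degrees form a minimal pair over $x$; since both compute every $H \in \operatorname{HYP}$, it follows that $H \le_{\T} x$ for all such $H$. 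Finally, $E$ is converted into a $\P^0_2$ class $G$ by projecting along a $t$-join with $t = s + \eps < 1$: the projection is H\"older with exponent $t$, giving $\Hm^s G \geq \Hm^{s/t} E = \infty$, while every element of $G$ is a join containing an element of $E$ and so still computes all of $\operatorname{HYP}$. Note that positive $\Hm^s$-measure of the subclass is used \emph{only} to guarantee nonemptiness --- your guiding heuristic that positive Hausdorff measure behaves like nonemptiness is exactly right, but the mechanism that transfers computational power to $x$ is the basis-theorem/minimal-pair argument, not measure-theoretic rigidity of the tree.
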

		\begin{proof}
			Let $\operatorname{HYP}$ denote the collection of all hyperarithmetical reals. Note that the set
			\[
				E = \{ z \in \Cant \colon \forall H \in\operatorname{HYP}\, H \le_{\T} z \}
			\]
			is $\S^{1}_{1}$ (an observation made by Enderton and Putnam \cite{EP}).
			$E$ has Hausdorff dimension $1$, since it contains the upper cone of $\mc O$, and Reimann \cite{reimann:phd} has shown that the upper cone of any Turing degree has Hausdorff dimension $1$. It follows that $\Hm^t E = \infty$ for any $t < 1$.  

			Suppose $x$ is such that there is a $\P^{0}_{1}(x)$ subclass of $E$ that is of non-zero $\Hm^s$-measure and hence non-empty.
			We apply two basis theorems for $\P^0_1$ classes (or rather, their relativized versions).
			By the low basis theorems each $H \in \operatorname{HYP}$ is recursive in a real $y$ that is low relative to $x$,
			and by the hyperimmune-free basis theorem, each $H \in \operatorname{HYP}$ is recursive in a real $z$ that is hyperimmune-free relative to $x$. 

			The reals $y$ and $z$ form a minimal pair over $x$, since no non-computable degree comparable with $x'$ can be hyperimmune-free relative to $x$, and being hyperimmune-free relative to $x$ is closed downwards in the Turing degrees.
			Hence we must have that  $H\le_{T} x$ for every $H \in \operatorname{HYP}$.

			It remains to show that we can replace $E$ by a $\P^0_2$ class with the same property. Every $\S^1_1$ class is the projection of a $\P^0_2$ class.
			Instead of using the standard projection on coded pairs, we can use a projection along a $t$-join, i.e.\  there exists a $\P^0_2$ class $G \subseteq \Cant$ such that
			\[
				E = \{ z \colon \exists y \; (z \join[t] y\in G) \}. 
			\]
			with $t = s+\eps < 1$, where $\eps >0$ is sufficiently small.

			If we let $\pi_t(z \join[t] y) = z$ be the projection of an $t$-join onto the first ``coordinate'', then for all $x \in G$,
			\[
				d(\pi_t(x), \pi_t(x')) \leq d(x,x')^t,
			\]
			hence $\pi_t$ is Hölder continuous with exponent $t$. It follows that 
			\[
				\Hm^{s} G \geq \Hm^{s/t} \pi_s(G) = \Hm^{s/t} E = \infty. 
			\]
			On the other hand, every element of $G$ still computes every hyperarithmetic real, since every element of $G$ is the join of an element of $E$ with another real.
			Hence the argument above remains valid and we get that if $x$ defines a $\P^0_1(x)$ subclass of $G$ of non-zero $\Hm^s$-measure, $x \geq_{\T} H$ for every $H \in \operatorname{HYP}$.
		\end{proof}

		We can also give a sufficient condition for hyperarithmeticity based on the ability to define a closed subset of positive measure. This follows from Solovay's characterization of hyperarithmetic reals through fast growing functions.

		\begin{df}[Solovay \cite{Solovay}]
			A family $F$ of infinite sets of natural numbers is said to be \emph{dense} if each infinite set of natural numbers has a subset in $F$.
			A set $A$ of natural numbers is said to be \emph{recursively encodable} if the family of infinite sets in which $A$ is recursive is dense. 
		\end{df}

		\begin{thm}[Solovay \cite{Solovay}]\label{Solo}
			The recursively encodable sets coincide with the hyperarithmetic sets.
		\end{thm}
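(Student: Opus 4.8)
The plan is to establish the two inclusions separately. I expect the implication ``hyperarithmetic $\Rightarrow$ recursively encodable'' to be the more technical but conceptually routine direction, and ``recursively encodable $\Rightarrow$ hyperarithmetic'' to contain the genuine difficulty.

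For the forward direction, fix a hyperarithmetic $A$ and an arbitrary infinite $X \subseteq \Nat$; the goal is to produce an infinite $Y \subseteq X$ with $A \leq_\T Y$. First I would reduce to the case $A \leq_\T H_a$ for a notation $a \in \mc O$ and proceed by effective transfinite recursion along $a$. The idea is to carve $X$ into consecutive finite blocks and use their contents to encode not the bits of $A$ themselves --- which would only make $A$ hyperarithmetically, not recursively, recoverable --- but the entire well-founded computation witnessing $A \leq_\T H_a$: the notation, the stagewise jump approximations, and markers recording the ordinal bookkeeping. Since this recursion is well-founded and recursive in the notation, a machine with oracle $Y$ can recover any single bit of $A$ by tracing the encoded computation down to its (terminating) base, so that $A \leq_\T Y$. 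The only delicate point is to arrange the block structure so that decoding is genuinely recursive in $Y$; this is exactly why one encodes the computation rather than its output, and it is here that the ``fast-growing'' thinning of $X$ enters.

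For the converse I would argue by contraposition: assuming $A \notin \operatorname{HYP}$, I would construct a single infinite set $X$ no infinite subset $Y \subseteq X$ of which satisfies $A \leq_\T Y$, which shows that the family of sets computing $A$ is not dense, i.e.\ that $A$ is not recursively encodable. The construction is a forcing argument of Mathias type: conditions are pairs $(\sigma, I)$ with $\sigma$ a finite stem and $I$ an infinite reservoir drawn from a suitably effective ($\S^1_1$) family, ordered by end-extension of the stem within the reservoir, and $X$ is the generic set. For each Turing functional $\Phi_e$ one imposes the requirement that $\Phi_e^Y \neq A$ for \emph{every} infinite $Y \subseteq X$, not merely for $X$ itself.

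The step I expect to be the main obstacle is the preservation lemma asserting that every condition can be extended to force $\Phi_e^Y \neq A$ uniformly over all infinite $Y$ below the reservoir, and this is precisely where non-hyperarithmeticity is essential. Consider, for a given condition, the predicate asserting that some infinite subset of its reservoir yields $\Phi_e^Y(n)\!\downarrow\, = i$; this is $\S^1_1$ in the condition. If no extension could diagonalize, this $\S^1_1$ forcing data would determine $A$ on a cofinal set of arguments, producing simultaneously a $\S^1_1$ and a $\P^1_1$ definition of $A$ and hence placing $A$ in $\Delta^1_1 = \operatorname{HYP}$ --- contradicting our assumption. Thus diagonalization is always available, the generic $X$ meets every requirement, and no infinite subset of $X$ computes $A$. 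Combining the two directions yields the stated coincidence of the recursively encodable and the hyperarithmetic sets.
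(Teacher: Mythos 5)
First, note that the paper itself offers no proof of Theorem \ref{Solo}: it is quoted from Solovay \cite{Solovay}. So your proposal can only be measured against the known proofs. Your forward direction (hyperarithmetic $\Rightarrow$ recursively encodable) is essentially the standard argument, though it can be streamlined: instead of encoding the transfinite computation block-by-block, one observes that every hyperarithmetic $A$ admits a \emph{modulus} $g$ (a settling-time function along the jump hierarchy $H_a$, obtained by effective transfinite recursion on $a \in \mc O$) such that $A \le_{\T} f$ for every $f$ dominating $g$; then any infinite $X$ has an infinite subset $Y$ whose principal function dominates $g$, so $A \le_{\T} Y$. This is exactly the device the paper uses in the proof of Theorem \ref{hyponly}. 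Your closing remark about the ``fast-growing thinning'' is this idea; your earlier claim that encoding the bits of $A$ would make $A$ only \emph{hyperarithmetically} recoverable is off target --- the obstruction to bit-coding is that an arbitrary $X$ need not offer the combinatorial configurations required to write bits into it, not the complexity of decoding.

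The genuine gap is in the converse, precisely at the preservation lemma you flag as the main obstacle: your dichotomy does not close. To force $\Phi_e^Y \neq A$ for \emph{every} infinite $Y \subseteq X$ you cannot commit a stem into $Y$ (an arbitrary subset need not contain it); you must instead find a condition $(\sigma', I')$ and an $n$ at which the value $A(n)$ is \emph{excluded}, i.e.\ no finite $F \subseteq \sigma' \cup I'$ yields $\Phi_e^F(n)\!\downarrow\, = A(n)$. (By finite use, your ``achievability'' predicate is in fact $\S^0_1(I')$, not just $\S^1_1$ --- harmless, but a sign the difficulty lies elsewhere.) Failure of diagonalization then means: in every subcondition, at every $n$, the value $A(n)$ remains achievable. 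Your proposed capture ``$A(n)=i$ iff $i$ is achievable in all subconditions'' pins down $A(n)$ only at those $n$ where the complementary value is eventually excluded; at arguments where \emph{both} values persist in every subcondition --- a case that genuinely occurs, e.g.\ for functionals computing parities of initial-segment counts --- neither horn of your dichotomy applies: you can neither exclude $A(n)$ nor define it, and ``determining $A$ on a cofinal set of arguments'' is in any case insufficient to define $A$. Handling this persistent-ambiguity case is the real content of the known arguments (one analyzes the compact $\S^1_1$ class of candidate reals codable below the condition and shows $A$ must be $\Delta^1_1$ if it cannot be separated from it). Finally, there is a relativization leak: even where your capture works, the $\S^1_1$ and $\P^1_1$ definitions quantify over conditions below the current one, hence are only $\S^1_1(I)$ and $\P^1_1(I)$, yielding $A \in \Delta^1_1(I)$ rather than $A \in \operatorname{HYP}$. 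Deleting the parameter requires maintaining all reservoirs inside nonempty lightface $\S^1_1$ classes and invoking a basis theorem (Gandy; Theorem \ref{basis}) so that reservoirs never acquire the power to define $A$; your phrase ``suitably effective $\S^1_1$ family'' gestures at this, but the invariance of that family under the thinnings performed in the preservation lemma is exactly what would need to be proved.
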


		\begin{thm}\label{hyponly}
			Let $0< s < 1$ be rational, assume that $E \subseteq \Cant$ is $\S^1_1$, and let $y \in \Cant$. If for some $U$ then $Y$ is hyperarithmetical.
		\end{thm}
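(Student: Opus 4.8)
The plan is to deduce hyperarithmeticity from Solovay's Theorem~\ref{Solo}: it suffices to show that $y$ is \emph{recursively encodable}, i.e.\ that the family of infinite sets in which $y$ is recursive is dense. Concretely, given an arbitrary infinite set $X \subseteq \omega$, I would produce an infinite subset $Z \subseteq X$ with $y \leq_{\T} Z$. Once this has been arranged for every infinite $X$, Solovay's characterization of the recursively encodable sets as exactly the hyperarithmetic ones yields $y \in \operatorname{HYP}$, which is the desired conclusion. Thus the whole argument is organized as a reduction of the defining hypothesis on $y$ to recursive encodability, with Solovay's theorem supplying the final inference.

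First I would set up, uniformly in the infinite set $X$, a $\S^1_1$ class of non-zero $\Hm^s$-measure that packages both $E$ and the combinatorics of $X$. The natural tool is the coded product of the previous section: taking an $A$-join $\join[A]$ with $A$ chosen so that its density positions track the elements of $X$, Proposition~\ref{pro:hmeas-join} guarantees that the resulting class retains non-zero $\Hm^s$-measure, provided the initial segments of $A$ grow at least linearly. This is exactly the point at which the hypothesis on $y$ enters: since $y$ defines a $\P^0_1(y)$ subclass $U$ of non-zero $\Hm^s$-measure, $U$ is in particular nonempty, and by the basis-theorem and tree machinery already exploited in Theorems~\ref{oh} and~\ref{top}, a path through $U$ can be located whose branching data lies along $X$, hence is recursive in an infinite subset $Z \subseteq X$.

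The decisive step is to read $y$ off from $Z$. Here I would use that $U$ is $\P^0_1(y)$ \emph{of positive measure}, so the tree of its finite approximations carries a definite positive lower density of surviving nodes; combined with the measure estimate underlying Theorem~\ref{p01} and the Decreasing Sets Lemma, this pins the defining parameter $y$ to the location of a surviving path. Reading that path off against the coded positions in $Z$ then recovers $y$ recursively from $Z$, giving $y \leq_{\T} Z$ with $Z \subseteq X$ infinite, i.e.\ recursive encodability of $y$.

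The main obstacle is precisely this last extraction. One must arrange the coded product uniformly and effectively enough that the coordinates encoding $y$ genuinely survive inside an infinite subset of $X$, while at the same time keeping the measure of the coded class bounded away from zero so that the hypothesis applies and the $\P^0_1(y)$ subclass is nonempty. Balancing the linear-growth requirement of Proposition~\ref{pro:hmeas-join} (needed for positive measure) against the sparseness of $X$ (needed to force a path into an infinite subset of $X$) is the technical heart of the argument; once this balance is struck, the remaining verifications are routine bookkeeping with the Increasing and Decreasing Sets Lemmas, and Solovay's theorem closes the proof.
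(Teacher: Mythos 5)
Your overall frame---reduce to recursive encodability and close with Solovay's Theorem~\ref{Solo}---is also the paper's frame, and your opening target (given any infinite $X$, produce an infinite $Z \subseteq X$ with $y \le_{\T} Z$) is exactly right. But you have inverted the hypothesis of the theorem, and this derails everything in between. The statement is garbled in the source, but its intended content is clarified by item (3) of Section~\ref{4}: the assumption is $\{y\} \le_w S(E)$, i.e.\ $y$ is recursive in \emph{every} parameter $x$ such that $E$ has a $\P^0_1(x)$ subclass of non-zero $\Hm^s$-measure---not, as you assume, that $y$ is itself \emph{one} such parameter. Under your reading the theorem is simply false: by Theorem~\ref{oh}, Kleene's $\mathcal{O}$ (and indeed any real computing it) defines a positive-measure $\P^0_1$ subclass of any such $E$, and $\mathcal{O}$ is not hyperarithmetic, so ``$y$ defines a $\P^0_1(y)$ subclass $U$ of non-zero measure'' cannot imply $y \in \operatorname{HYP}$. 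Correspondingly, your ``decisive step''---reading $y$ off from a surviving path via the density of nodes in the tree of $U$, the estimates behind Theorem~\ref{p01}, and the Decreasing Sets Lemma---cannot work even in principle: the defining parameter of a $\P^0_1(y)$ class is not recoverable from the class or its members, since parameters of wildly different Turing degrees define the very same class.

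Under the correct reading, no extraction is needed and the proof is far shorter than what you propose. By Corollary~\ref{BD-cor} there is a single function $r \in \Baire$ such that for \emph{every} $f$ majorizing $r$, the class $C_f$ is a $\P^0_1(f)$ subclass of $E$ of non-zero $\Hm^s$-measure. Given an arbitrary infinite $A \subseteq \Nat$, thin it to an infinite subset $B = \{b_0 < b_1 < b_2 < \dots\} \subseteq A$ whose principal function $p_B(n) = b_n$ dominates $r$; then $B$, as an oracle, defines the positive-measure closed subclass $C_{p_B}$, so the hypothesis on $y$ hands you $y \le_{\T} B$ directly---this is where the assumption enters, replacing your attempted measure-theoretic extraction. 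Hence $y$ is recursively encodable, and Solovay's theorem finishes, as you said. The coded-product machinery of Proposition~\ref{pro:hmeas-join} plays no role here; the only measure-theoretic input is Corollary~\ref{BD-cor}, and the only combinatorics needed is that every infinite set of naturals has an infinite subset whose principal function grows as fast as you like.
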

		\begin{proof} 
			Suppose $y$ is recursive in each $x$ defining a $\P^0_1$ subclass of non-zero $\Hm^s$-measure of $E$. In
			particular, it is recursive in any (graph of a) function $r$ as in Corollary \ref{BD-cor}, and any $f$ dominating $r$. If
			$A \subseteq \Nat$ is infinite, it has an infinite subset $B = \{b_0 < b_1 < b_2 < \dots \}$ so that the function
			$p_B(n) = b_n$ dominates $r$ and hence defines a closed subset of non-zero $\Hm^s$-measure. It follows that $y$ is recursively encodable and thus hyperarithemtic.
		\end{proof}

	\section{Mass problems}\label{4}
		It is beneficial to phrase the preceding results as \emph{mass problems}. Recall that a mass problem is a subset of $\Cant$. Given a $\S^1_1$ class $E$ of non-zero $\Hm^s$-measure, $0 < s < 1$ rational, we define the mass problem
		\[
			S(E) = \{ x \in \Cant \colon \text{$E$ has a $\P^{0}_{1}(x)$ subclass of non-zero $\Hm^s$-measure} \}. 
		\]
		For sets of reals $X$, $Y$, $X$ is called \emph{weakly (Muchnik) reducible} to $Y$, $X \leq_w Y$ if for each $y \in Y$ there is some $x \in X$ such that $x \leq_{\T} y$.

		Our results now read as follows.
		\begin{enumerate}[(1)]
			\item If $z \in \Cant$ is $\P^{1}_{1}$-complete then $S(E)\le_{w}\{z\}$ for any $E$. (Theorem \ref{oh})
			\item There is a $\P^{0}_{2}$ class $G$ such that for each hyperarithmetical real $y$, $\{y\}\le_{w} S(G)$. (Theorem \ref{top})
			\item For each real $y$, if $\{y\}\le_{w}S(E)$ for some $E$, then $y$ is hyperarithmetical. (Theorem \ref{hyponly})
		\end{enumerate}
		The situation is summarized in Figure \ref{fig:Ln}.
		\begin{figure}[htb!]
		\centering%
		\includegraphics[scale=.33]{}
		\caption{
			The relative position in the Muchnik lattice of the various mass problems $S(E)$.
			At the top is Kleene's $\mathcal{O}$, according to Theorems \ref{basis}(1) and \ref{oh}. The ellipse represents the hyperarithmetical sets $\operatorname{HYP}$ with their cofinal sequence $\{0^{(\alpha)} \colon \alpha < \omega_1^{CK} \}$.
			The top $S(G)$ class is located as indicated in Theorem \ref{top}. Each $S(E)$ bounds only sets in HYP, per Theorem \ref{hyponly}.
			It is not known which of the classes $E$ here displayed might represent the set of Turing degrees in $\operatorname{BN1R}$.
		}
		\label{fig:Ln}
		\end{figure}
	\newpage
	\begin{bibsection}
		\begin{biblist}
			\bib{besicovitch:concentrated_1933a}{article}{
			   author={Besicovitch, A. S.},
			   title={Concentrated and rarified sets of points},
			   journal={Acta Math.},
			   volume={62},
			   date={1933},
			   number={1},
			   pages={289--300},
			   issn={0001-5962},
			   review={\MR{1555386}},
			   doi={10.1007/BF02393607},
			}

			\bib{Besicovitch}{article}{
			   author={Besicovitch, A. S.},
			   title={On existence of subsets of finite measure of sets of infinite
			   measure},
			   journal={Nederl. Akad. Wetensch. Proc. Ser. A. {\bf 55} = Indagationes
			   Math.},
			   volume={14},
			   date={1952},
			   pages={339--344},
			   review={\MR{0048540 (14,28e)}},
			}

			\bib{besicovitch:approximation_1954a}{article}{
			   author={Besicovitch, A. S.},
			   title={On approximation in measure to Borel sets by $F_\sigma$-sets},
			   journal={J. London Math. Soc.},
			   volume={29},
			   date={1954},
			   pages={382--383},
			   issn={0024-6107},
			   review={\MR{0062201 (15,943f)}},
			}

			\bib{BesicMoran}{article}{
			   author={Besicovitch, A. S.},
			   author={Moran, P. A. P.},
			   title={The measure of product and cylinder sets},
			   journal={J. London Math. Soc.},
			   volume={20},
			   date={1945},
			   pages={110--120},
			   issn={0024-6107},
			   review={\MR{0016448 (8,18f)}},
			}

			\bib{Davies}{article}{
			   author={Davies, R. O.},
			   title={Subsets of finite measure in analytic sets},
			   journal={Nederl. Akad. Wetensch. Proc. Ser. A. {\bf 55} = Indagationes
			   Math.},
			   volume={14},
			   date={1952},
			   pages={488--489},
			   review={\MR{0053184 (14,733g)}},
			}

			\bib{davies-rogers:problem_1969}{article}{
			   author={Davies, Roy O.},
			   author={Rogers, C. A.},
			   title={The problem of subsets of finite positive measure},
			   journal={Bull. London Math. Soc.},
			   volume={1},
			   date={1969},
			   pages={47--54},
			   issn={0024-6093},
			   review={\MR{0240267 (39 \#1616)}},
			}

			\bib{DK}{article}{
			   author={Diamondstone, David},
			   author={Kjos-Hanssen, Bj{\o}rn},
			   title={Martin-L\"of randomness and Galton-Watson processes},
			   journal={Ann. Pure Appl. Logic},
			   volume={163},
			   date={2012},
			   number={5},
			   pages={519--529},
			   issn={0168-0072},
			   review={\MR{2880270}},
			   doi={10.1016/j.apal.2011.06.010},
			}

			\bib{Dobrinen-Simpson}{article}{
			   author={Dobrinen, Natasha L.},
			   author={Simpson, Stephen G.},
			   title={Almost everywhere domination},
			   journal={J. Symbolic Logic},
			   volume={69},
			   date={2004},
			   number={3},
			   pages={914--922},
			   issn={0022-4812},
			   review={\MR{2078930 (2005d:03079)}},
			   doi={10.2178/jsl/1096901775},
			}

			\bib{DH}{book}{
			   author={Downey, Rodney G.},
			   author={Hirschfeldt, Denis R.},
			   title={Algorithmic randomness and complexity},
			   series={Theory and Applications of Computability},
			   publisher={Springer, New York},
			   date={2010},
			   pages={xxviii+855},
			   isbn={978-0-387-95567-4},
			   review={\MR{2732288 (2012g:03001)}},
			   doi={10.1007/978-0-387-68441-3},
			}

			\bib{EP}{article}{
			   author={Enderton, H. B.},
			   author={Putnam, Hilary},
			   title={A note on the hyperarithmetical hierarchy},
			   journal={J. Symbolic Logic},
			   volume={35},
			   date={1970},
			   pages={429--430},
			   issn={0022-4812},
			   review={\MR{0290971 (45 \#65)}},
			}

			\bib{falconer:1990}{book}{
			   author={Falconer, Kenneth},
			   title={Fractal geometry},
			   note={Mathematical foundations and applications},
			   publisher={John Wiley \& Sons, Ltd., Chichester},
			   date={1990},
			   pages={xxii+288},
			   isbn={0-471-92287-0},
			   review={\MR{1102677 (92j:28008)}},
			}

			\bib{Gandy}{article}{
			   author={Gandy, R. O.},
			   title={On a problem of Kleene's},
			   journal={Bull. Amer. Math. Soc.},
			   volume={66},
			   date={1960},
			   pages={501--502},
			   issn={0002-9904},
			   review={\MR{0122724 (23 \#A64)}},
			}

			\bib{GKT}{article}{
			   author={Gandy, R. O.},
			   author={Kreisel, G.},
			   author={Tait, W. W.},
			   title={Set existence},
			   journal={Bull. Acad. Polon. Sci. S\'er. Sci. Math. Astronom. Phys.},
			   volume={8},
			   date={1960},
			   pages={577--582},
			   issn={0001-4117},
			   review={\MR{0159747 (28 \#2964a)}},
			}

			\bib{GM}{article}{
			   author={Greenberg, Noam},
			   author={Miller, Joseph S.},
			   title={Diagonally non-recursive functions and effective Hausdorff
			   dimension},
			   journal={Bull. Lond. Math. Soc.},
			   volume={43},
			   date={2011},
			   number={4},
			   pages={636--654},
			   issn={0024-6093},
			   review={\MR{2820150 (2012g:03112)}},
			   doi={10.1112/blms/bdr003},
			}

			\bib{Kurtz}{article}{
			   author={Greenberg, Noam},
			   author={Miller, Joseph S.},
			   title={Lowness for Kurtz randomness},
			   journal={J. Symbolic Logic},
			   volume={74},
			   date={2009},
			   number={2},
			   pages={665--678},
			   issn={0022-4812},
			   review={\MR{2518817 (2010b:03050)}},
			   doi={10.2178/jsl/1243948333},
			}

			\bib{howroyd}{article}{
			   author={Howroyd, J. D.},
			   title={On dimension and on the existence of sets of finite positive
			   Hausdorff measure},
			   journal={Proc. London Math. Soc. (3)},
			   volume={70},
			   date={1995},
			   number={3},
			   pages={581--604},
			   issn={0024-6115},
			   review={\MR{1317515 (96b:28004)}},
			   doi={10.1112/plms/s3-70.3.581},
			}

			\bib{Larman}{article}{
			   author={Larman, D. G.},
			   title={On Hausdorff measure in finite-dimensional compact metric spaces},
			   journal={Proc. London Math. Soc. (3)},
			   volume={17},
			   date={1967},
			   pages={193--206},
			   issn={0024-6115},
			   review={\MR{0210874 (35 \#1759)}},
			}

			\bib{Law}{article}{
			   author={Kjos-Hanssen, Bj{\o}rn},
			   title={A strong law of computationally weak subsets},
			   journal={J. Math. Log.},
			   volume={11},
			   date={2011},
			   number={1},
			   pages={1--10},
			   issn={0219-0613},
			   review={\MR{2833148}},
			   doi={10.1142/S0219061311000980},
			}

			\bib{Kjos-Hanssen:2007a}{article}{
			   author={Kjos-Hanssen, Bj{\o}rn},
			   title={Low for random reals and positive-measure domination},
			   journal={Proc. Amer. Math. Soc.},
			   volume={135},
			   date={2007},
			   number={11},
			   pages={3703--3709},
			   issn={0002-9939},
			   review={\MR{2336587 (2008g:03070)}},
			   doi={10.1090/S0002-9939-07-08648-0},
			}

			\bib{Extracting}{article}{
			   author={Freer, Cameron E.},
			   author={Kjos-Hanssen, Bj{\o}rn},
			   title={Randomness extraction and asymptotic Hamming distance},
			   journal={Log. Methods Comput. Sci.},
			   volume={9},
			   date={2013},
			   number={3},
			   pages={3:27, 14},
			   issn={1860-5974},
			   review={\MR{3116543}},
			   doi={10.2168/LMCS-9(3:27)2013},
			}
	
			\bib{MRL}{article}{
			   author={Kjos-Hanssen, Bj{\o}rn},
			   title={Infinite subsets of random sets of integers},
			   journal={Math. Res. Lett.},
			   volume={16},
			   date={2009},
			   number={1},
			   pages={103--110},
			   issn={1073-2780},
			   review={\MR{2480564 (2010b:03051)}},
			   doi={10.4310/MRL.2009.v16.n1.a10},
			}

			\bib{KN}{article}{
			   author={Kjos-Hanssen, Bj{\o}rn},
			   author={Nerode, Anil},
			   title={Effective dimension of points visited by Brownian motion},
			   journal={Theoret. Comput. Sci.},
			   volume={410},
			   date={2009},
			   number={4-5},
			   pages={347--354},
			   issn={0304-3975},
			   review={\MR{2493984 (2009k:68100)}},
			   doi={10.1016/j.tcs.2008.09.045},
			}

			\bib{K.Reimann:10}{article}{
			   author={Kjos-Hanssen, Bj{\o}rn},
			   author={Reimann, Jan},
			   title={The strength of the Besicovitch-Davies theorem},
			   conference={
			      title={Programs, proofs, processes},
			   },
			   book={
			      series={Lecture Notes in Comput. Sci.},
			      volume={6158},
			      publisher={Springer, Berlin},
			   },
			   date={2010},
			   pages={229--238},
			   review={\MR{2678134 (2012c:03202)}},
			   doi={10.1007/978-3-642-13962-8\_26},
			}

			\bib{KL}{article}{
			   author={Kumabe, Masahiro},
			   author={Lewis, Andrew E. M.},
			   title={A fixed-point-free minimal degree},
			   journal={J. Lond. Math. Soc. (2)},
			   volume={80},
			   date={2009},
			   number={3},
			   pages={785--797},
			   issn={0024-6107},
			   review={\MR{2559129 (2011d:03064)}},
			   doi={10.1112/jlms/jdp049},
			}

			\bib{larman:hausdorff_1967}	{article}{
				   author={Larman, D. G.},
				   title={On Hausdorff measure in finite-dimensional compact metric spaces},
				   journal={Proc. London Math. Soc. (3)},
				   volume={17},
				   date={1967},
				   pages={193--206},
				   issn={0024-6115},
				   review={\MR{0210874 (35 \#1759)}},
				}

			\bib{larman:theory_1967}{article}{
			   author={Larman, D. G.},
			   title={A new theory of dimension},
			   journal={Proc. London Math. Soc. (3)},
			   volume={17},
			   date={1967},
			   pages={178--192},
			   issn={0024-6115},
			   review={\MR{0203691 (34 \#3540)}},
			}

			\bib{mattila}{book}{
			   author={Mattila, Pertti},
			   title={Geometry of sets and measures in Euclidean spaces},
			   series={Cambridge Studies in Advanced Mathematics},
			   volume={44},
			   note={Fractals and rectifiability},
			   publisher={Cambridge University Press, Cambridge},
			   date={1995},
			   pages={xii+343},
			   isbn={0-521-46576-1},
			   isbn={0-521-65595-1},
			   review={\MR{1333890 (96h:28006)}},
			   doi={10.1017/CBO9780511623813},
			}

			\bib{M}{article}{
			   author={Miller, Joseph S.},
			   title={Extracting information is hard: a Turing degree of non-integral
			   effective Hausdorff dimension},
			   journal={Adv. Math.},
			   volume={226},
			   date={2011},
			   number={1},
			   pages={373--384},
			   issn={0001-8708},
			   review={\MR{2735764 (2012a:03114)}},
			   doi={10.1016/j.aim.2010.06.024},
			}

			\bib{reimann:phd}{misc}{
				author={Reimann, J.},
				title={Computability and fractal dimension},
				organization={doctoral dissertation, Universit{\"a}t {H}eidelberg},
				date={2004},
			}

			\bib{Reimann}{article}{
			   author={Reimann, Jan},
			   title={Effectively closed sets of measures and randomness},
			   journal={Ann. Pure Appl. Logic},
			   volume={156},
			   date={2008},
			   number={1},
			   pages={170--182},
			   issn={0168-0072},
			   review={\MR{2474448 (2010a:03043)}},
			   doi={10.1016/j.apal.2008.06.015},
			}

			\bib{CARogers}{book}{
			   author={Rogers, C. A.},
			   title={Hausdorff measures},
			   publisher={Cambridge University Press, London-New York},
			   date={1970},
			   pages={viii+179},
			   review={\MR{0281862 (43 \#7576)}},
			}

			\bib{Rogers}{book}{
			   author={Rogers, Hartley, Jr.},
			   title={Theory of recursive functions and effective computability},
			   edition={2},
			   publisher={MIT Press, Cambridge, MA},
			   date={1987},
			   pages={xxii+482},
			   isbn={0-262-68052-1},
			   review={\MR{886890 (88b:03059)}},
			}

			\bib{Sacks}{book}{
			   author={Sacks, Gerald E.},
			   title={Higher recursion theory},
			   series={Perspectives in Mathematical Logic},
			   publisher={Springer-Verlag, Berlin},
			   date={1990},
			   pages={xvi+344},
			   isbn={3-540-19305-7},
			   review={\MR{1080970 (92a:03062)}},
			   doi={10.1007/BFb0086109},
			}

			\bib{simpson}{article}{
			   author={Simpson, Stephen G.},
			   title={Mass problems and measure-theoretic regularity},
			   journal={Bull. Symbolic Logic},
			   volume={15},
			   date={2009},
			   number={4},
			   pages={385--409},
			   issn={1079-8986},
			   review={\MR{2682785 (2012c:03116)}},
			   doi={10.2178/bsl/1255526079},
			}

			\bib{Soare}{book}{
			   author={Soare, Robert I.},
			   title={Recursively enumerable sets and degrees},
			   series={Perspectives in Mathematical Logic},
			   note={A study of computable functions and computably generated sets},
			   publisher={Springer-Verlag, Berlin},
			   date={1987},
			   pages={xviii+437},
			   isbn={3-540-15299-7},
			   review={\MR{882921 (88m:03003)}},
			   doi={10.1007/978-3-662-02460-7},
			}

			\bib{Solovay}{article}{
			   author={Solovay, Robert M.},
			   title={Hyperarithmetically encodable sets},
			   journal={Trans. Amer. Math. Soc.},
			   volume={239},
			   date={1978},
			   pages={99--122},
			   issn={0002-9947},
			   review={\MR{0491103 (58 \#10375)}},
			}
		\end{biblist}
	\end{bibsection}
\end{document}